\title{Fractal percolation on statistically self-affine carpets}
\author{Kenneth Falconer and Tianyi Feng\\
\small{{\it Mathematical Institute,  
University of St~Andrews, North Haugh, St~Andrews,}} \\
\small{{\it Fife, KY16~9SS, Scotland }}} 
\date{}
\newcommand{\be}{\begin{equation}}
\newcommand{\ee}{\end{equation}}
\renewcommand{\P}{\mathbb{P}}
\newtheorem{theorem}{Theorem}[section]
\newtheorem{lemma}[theorem]{Lemma}
\newtheorem{ques}[theorem]{Question}
\newtheorem{prop}[theorem]{Proposition}
\theoremstyle{definition}
\theoremstyle{remark}
\numberwithin{equation}{section}
\begin{document}
\maketitle
\begin{abstract}
We consider a random self-affine carpet $F$ based on an $n\times m$ subdivision of rectangles and a probability $0<p<1$.  Starting by dividing $[0,1]^2$  into an $n\times m$ grid of rectangles and selecting these independently with probability $p$, we then divide the selected rectangles into $n\times m$ subrectangles which are again selected with probability $p$; we continue in this way to obtain a statistically self-affine set $F$.
We are particularly interested in topological properties of $F$. We show that the critical value of $p$ above which there is a positive probability that $F$ connects the left and right edges of $[0,1]^2$ is the same as the critical value for  $F$ to connect the top and bottom edges of $[0,1]^2$.
Once this is established we derive various topological properties of $F$ analogous to those known for self-similar carpets.
\end{abstract}

\section{Introduction}\label{setting}
The notion of fractal percolation was introduced in 1974 by Mandelbrot  \cite{Man} and discussed in his classic book \cite{Manbook}, where it is termed `canonical curdling'. Fractal percolation concerns the topological properties of a statistically self-similar set, obtained by repeating a random operation at ever-decreasing scales. 

The square-based fractal percolation model, which has been well-studied \cite{CCD, DM, Don, Mee}, is based on repeated subdivision of the unit square  into smaller squares which are selected at random.  Let $m\geq 2$ be an integer and   $0\leq p\leq 1$ a probability. The unit square $E_0=[0,1]^2$ is divided into a grid of $m^2$ closed subsquares each of side-length $m^{-1}$  and each of these subsquares is selected independently with probability $p$; we write $E_1$ for the random set formed as the union of these selected squares. We next divide each of these selected subsquares into a grid of squares of side $m^{-2}$, and, in a similar way, select these subsquares independently with probability $p$, with their union forming the closed set $E_2$. We continue in this way to obtain a decreasing sequence of closed sets $E_k$ where each $E_k$ is a union of squares of side $m^{-k}$. The random closed set $F:=\bigcap_{k=0}^\infty E_k$ is termed a {\it statistically self-similar carpet}. For $p$ sufficiently large there is a positive probability that $F$ is non-empty and, conditional on this, various properties of $F$ have been examined, including the  almost sure box-counting and Hausdorff dimensions of $F$. Of particular interest here are the topological properties of $F$ which change dramatically as $p$ increases through a {\it critical value} $p_C$. For  $p<p_C$ the random set $F$  is almost surely totally disconnected, whereas if $p\geq p_C$  it almost surely has many non-trivial connected components and there is a positive probability of the left and right sides of the square being joined through $F$, and similarly for the top and bottom sides. Finding the value of $p_C$ even for small $m$ is difficult, but some (not very close) lower and upper bounds have been obtained, see \cite{CCD, Don, Whi}. 

A number of works examine the structure of the random set $F$ itself. For example, \cite{BJJKP} shows that there is an $\alpha< 1$ such that $F$ is almost surely purely $\alpha$-unrectifiable, that is intersects every $\alpha$-H\"{o}lder curve in a set of $1/\alpha$-dimensional Hausdorff measure 0. Various generalizations of this model have been investigated, for example \cite{BC} considers a more general class of continuum fractal percolation showing that connected components at the critical probability is a consequence of scale invariance. Then \cite{BEMT} considers phase transitions in a fractal cylinder process,  a fractal version of the Poisson cylider model.

Rather more general statistically self-similar sets have been considered, particularly from a dimensional viewpoint, see for example \cite{Fal, MW, Tro1}. This paper is concerned with statistically self-affine carpets. These are constructed, for integers $m>n\geq 2$ and a  probability $0\leq p\leq 1$, in a similar way to the self-similar square-based model above, except we select subrectangles rather than subsquares. Thus $E_0=[0,1]^2$ is divided into an $n\times m$ array of rectangles of sides $n^{-1} \times m^{-1}$ which are selected independently with probability $p$, the selected rectangles being repeatedly subdivided and selected in the same way, to get a decreasing sequence of sets $E_k$ formed as a union of $n^{-k} \times m^{-k}$ rectangles, to yield a statistically self-affine set $F=\bigcap_{k=0}^\infty E_k$. A more precise description of this model will be given in the next section.

\begin{figure}[h]
\begin{center}
\includegraphics[width = 0.9\textwidth]{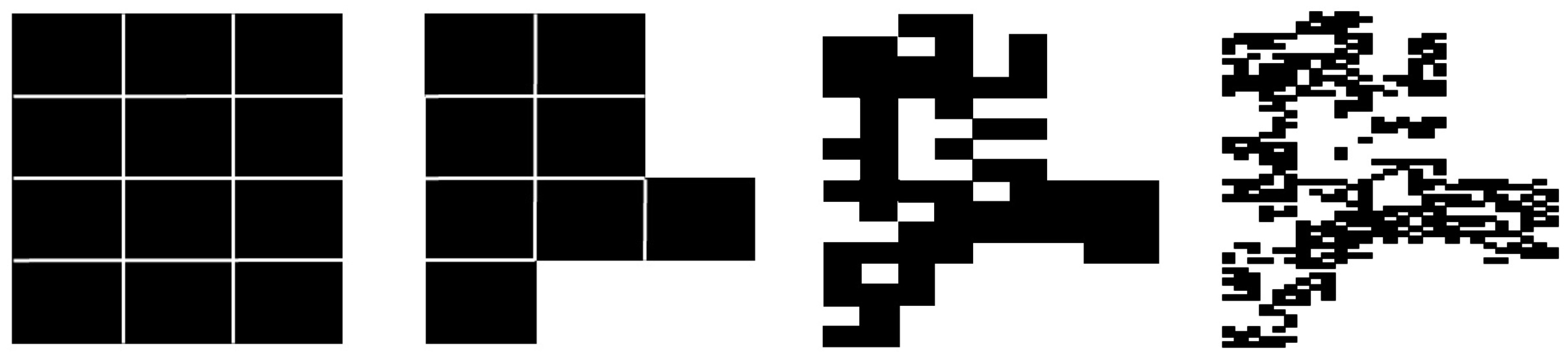}
$\qquad E_0  =[0,1]^2 \qquad\qquad\quad\quad E_1 \qquad\qquad\qquad\qquad  E_2 \qquad\qquad\qquad \quad\ E_3\!\qquad$
\caption{The first three steps in the construction of a statistically self-affine $F$.}\label{percEi}
\end{center}
\end{figure}

We are interested in topological properties of this statistically self-affine $F$ and in particular to what extent a random construction based on hierarchy of rectangles differs from the square case. With the rectangles becoming wide and squat for large $k$, it is not clear that the critical probability for horizontal (left-right) crossings to occur in $F$ is the same as that for vertical (top-bottom) crossings. We will show that these critical probabilities are in fact equal, that is if for some $p$ there is a positive probability of a horizontal crossing then there is also a positive probability of a vertical crossing, and vice-versa. Once this is established, modifying methods used for the statistically self-similar $m \times m$ case yields information about the topology of the random set $F$.

We have aimed to make this  article fairly self-contained by including some proofs that are reminiscent of arguments that have been used for percolation on statistically self-similar carpets.

\section{Model and results}\label{model}
Here we describe the construction of the random set $F$ and state our results.  
We fix integers $m>n\geq 2$ throughout the construction and its analysis.

We divide the unit square $E_0:= [0,1]^2$ into a rectangular array of  $mn$ level-1 closed rectangles of sides $n^{-1}\times m^{-1}$ which we label (in some order)  $\{R_{i_1}\}_{1\leq i_1 \leq mn}$. We then divide each  $\{R_{i_1}\}$  into $mn$ level-2 closed rectangles  $\{R_{i_1,i_2}\}_{1\leq i_2 \leq mn}$ of sides $n^{-2}\times m^{-2}$. We continue in this way, so that $\{R_{i_1,\ldots, i_k}\}_{1\leq i_1,\ldots, i_k \leq mn}$ are  $(mn)^k$ level-$k$ closed rectangles of sides $n^{-k}\times m^{-k}$, with $R_{i_1,\ldots, i_k}= \bigcup_{1\leq i \leq mn} R_{i_1,\ldots, i_k, i}$ for each $k$ and $(i_1,\ldots,i_k)$.

Let $0\leq p \leq 1$ be a probability.  We select each rectangle $\{R_{i_1}\}$ independently with probability $p$ and reject it with probability $1-p$ and we write $E_1$ for the union of these selected rectangles. For each of the selected rectangles $\{R_{i_1}\}$ we select  level-2 subrectangles $\{R_{i_1,i_2}\}$ with probability $p$ and let $E_2$ be the union of all these selected rectangles. We continue in this way to get a decreasing sequence of closed sets $E_k$, the union of a random selection of $n^{-k}\times m^{-k}$ closed rectangles, and let $F= \bigcap_{k=1}^\infty E_k$ be the random limit set which will be closed. The number of rectangles in $E_k$ is given by a Galton-Watson branching process which implies in particular that there is a positive probability of $F$ being non-empty if and only if $p>1/mn$.

We formalise this by defining a probability space $(\Omega, \mathcal{F},\P)$ on the underlying rectangles  by 
\begin{equation}\label{Ek}
\Omega ={\mathcal {P}}\Big(\big(\{R_{i_1}\}_{1\leq i_1 \leq mn},\{R_{i_1,i_2}\}_{1\leq i_1,i_2 \leq mn},\ldots\big)\Big)\end{equation}
where $\mathcal {P}$ denotes the power set, with $\mathcal{F}$ the product $\sigma$-field on this countable sequence of sets, and the product probability $\P$ defined by setting $\P( R_{i_1,\ldots,i_k})=p$ independently for all
$k$ and all  $(i_1,\ldots,i_k) \ (1\leq i_j\leq mn)$, and extending to $\mathcal{F}$. We identify $R_{i_1,\ldots,i_k}\in \omega$ with the event  `the rectangle $R_{i_1,\ldots,i_k}$ is selected' in the random geometric construction.

Then for $k\geq 1$,
$$E_k =  \bigcup \{ R_{i_1,\ldots,i_k}: R_{i_1}, R_{i_1,i_2},\ldots.,R_{i_1,\ldots,i_k}\in \omega\}=
\bigcap_{j=1}^k  \bigcup \{ R_{i_1,\ldots,i_j}\in \omega\} \quad  (\omega \in \Omega),$$
and the random self-affine set  $F$ obtained by this process is.
$$F \equiv F(\omega) = \bigcap_{k=1}^\infty E_k =  \bigcap_{j=1}^\infty  \bigcup \{ R_{i_1,\ldots,i_j}\in \omega\} \quad  (\omega \in \Omega)$$
Note that it is convenient to consider selection or rejection of all the rectangles in the space, even if their `parent' rectangle has not been selected.
We also let
$$F_k =  \bigcap_{j=k}^\infty  \bigcup \{ R_{i_1,\ldots,i_j}\in \omega\}\quad  (\omega \in \Omega);$$
thus $F_1= F$ and, for $k\geq 1$, $F_k$ is the random set obtained starting the random selection   at level $k$, and assuming that all rectangle at levels $1$ to $k-1$ are selected. Note that the sets $E_k$ and thus $F$ and  $F_k$ are compact.

We will be interested in topological properties and especially connectivity properties of $F$ and how they vary with $p$, see Figure \ref{pics}. In particular we consider when there is a positive probability of $F$ providing vertical and/or horizontal crossings of the unit square, that is when there are points in opposite sides of the square that are in the same connected component $F$. 
\begin{figure}[h]
\begin{center}
\includegraphics[width = 0.25\textwidth]{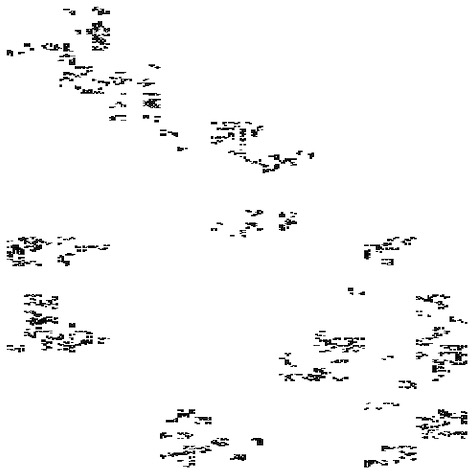}
\qquad
\includegraphics[width = 0.25\textwidth]{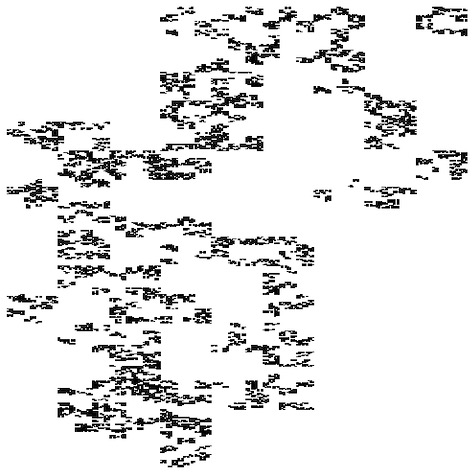}
\medskip
\qquad
\includegraphics[width = 0.25\textwidth]{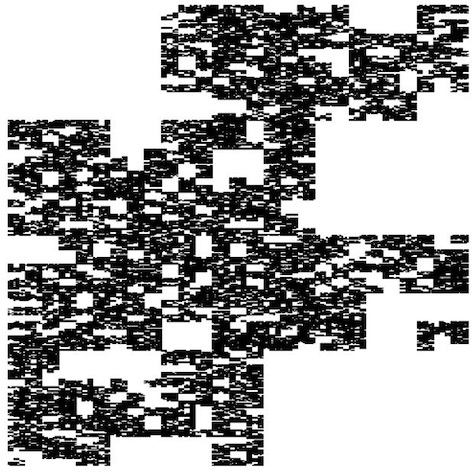}\\
\medskip 
(a) $p=0.4$\qquad \qquad \qquad \qquad (b) $p=0.5$
\qquad \qquad \qquad  
 (c) $p=0.78$\qquad 
\caption{Realisations of $F$ based on a $3\times 4$ rectangular grid, for different retention probabilities $p$.}\label{pics}
\end{center}
\end{figure}

We review first some basic properties of statistically self-affine sets. With $F$ constructed as above, the following proposition records the probability of $F$ being empty, and the almost sure box-counting, Hausdorff and Assouad dimensions of $F$ if it is non-empty. 

\begin{prop}\label{dims} 
$({\rm i})$ If $0\leq p\leq 1/mn$ then $F=\emptyset$ almost surely. If $1/mn<p \leq 1$ then $\P(F\neq \emptyset)=t>0$, where $t$ is the least non-negative solution of $t= (pt+1-p)^{mn}$.

$({\rm ii})$ If $1/mn<p \leq 1$ then, conditional on  $F\neq \emptyset$, 
$$\dim_H F = \dim_B F = 
\left\{
\begin{array}{ll}
 \log(pnm)\big/\log n &  (1/mn<p\leq 1/m)    \\
 \log(pm^2)\big/\log m &  (1/m<p\leq 1) 
   \end{array}
\right. ,
$$
$$\dim_A F = 2 \ \ (1/mn<p\leq 1).$$
\end{prop}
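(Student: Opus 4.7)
My plan is to verify each part separately, exploiting the Galton-Watson structure that the independent selection imposes on the level-$k$ rectangle counts.

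For part (i), let $Z_k$ denote the number of level-$k$ rectangles selected in $E_k$. By the independence of selections, $(Z_k)_{k\ge 0}$ is a Galton-Watson process with Binomial$(mn,p)$ offspring distribution and mean $\mu=mnp$. Standard branching-process theory then gives almost sure extinction when $\mu\le 1$ and positive survival probability when $\mu>1$. Since the $E_k$ are nested and compact, $F\ne\emptyset$ is equivalent to non-extinction, and the survival probability is characterised via the probability generating function $f(s)=(ps+1-p)^{mn}$ in the standard way.

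For the box dimension, I would cover $E_k$ using squares of the two natural side lengths $m^{-k}$ and $n^{-k}$. Each level-$k$ rectangle meets at most $\lceil (m/n)^k\rceil$ squares of side $m^{-k}$ and at most one square of side $n^{-k}$. Combined with the Kesten-Stigum theorem (giving $Z_k\sim (mnp)^k$ almost surely on survival), this yields the upper bounds $\log(m^2p)/\log m$ from the $m^{-k}$ covering and $\log(mnp)/\log n$ from the $n^{-k}$ covering; the minimum of the two is the piecewise function in the proposition, with the transition exactly at $p=1/m$. For matching lower bounds, and the coincidence with the Hausdorff dimension, I would construct a mass distribution on $F$ and apply a Frostman-type estimate: for $p>1/m$ the natural measure spreading mass uniformly over surviving level-$k$ rectangles works; for $1/mn<p\le 1/m$ the sub-process within a column of width $n^{-k}$ is sub-critical, so typically at most one level-$k$ rectangle survives there, and the appropriate measure distributes mass uniformly over non-empty columns, in the style of the Bedford-McMullen-type analyses in \cite{Fal,MW}. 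I expect this lower bound in the sub-critical column regime to be the most delicate step.

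For the Assouad dimension, $\ad F\le 2$ is immediate from $F\subseteq\R^2$. For the lower bound, I would exhibit a weak tangent equal to $[0,1]^2$ by showing that $F$ almost surely contains arbitrarily small fully-occupied rectangles. Given any $\ell\ge 1$, the conditional probability that every descendant down to level $k+\ell$ of a selected level-$k$ rectangle is itself selected equals $p^{mn+(mn)^2+\cdots+(mn)^\ell}>0$. On the survival event $Z_k\to\infty$ almost surely, and these ``full-to-depth-$\ell$'' events are conditionally independent across distinct level-$k$ rectangles, so the second Borel-Cantelli lemma forces such a rectangle to appear for every $\ell$. Rescaling these fully-occupied rectangles produces weak tangents approximating $[0,1]^2$ in the Hausdorff metric, hence $\ad F=2$.
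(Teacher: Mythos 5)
Your part (i) is exactly the paper's argument: the Galton--Watson process with Binomial$(mn,p)$ offspring, extinction iff $mnp\le 1$, and the survival probability as the least fixed point of the generating function $(ps+1-p)^{mn}$. For part (ii) the paper does not give a proof at all --- it cites the literature (Barral--Feng, Lalley--Gatzouras, Troscheit for the box and Hausdorff dimensions, and Fraser--Miao--Troscheit for the Assouad dimension), noting only that the key input is that the horizontal projection of $F$ has dimension $\min\{1,\log(pnm)/\log n\}$. So you are attempting something the authors deliberately outsource. Your two-scale covering argument for the upper box dimension is correct and tight here (the minimum of $\log(m^2p)/\log m$ and $\log(mnp)/\log n$ does switch exactly at $p=1/m$), and your plan for the Hausdorff lower bound --- uniform branching measure when columns are supercritical, a column-based measure when $p\le 1/m$ --- is the right idea, though as you acknowledge it is only a plan.

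The Assouad dimension argument, however, has a genuine gap that is specific to the self-affine setting. A level-$k$ rectangle has eccentricity $(m/n)^k\to\infty$, and weak tangents are taken with respect to similarities, not affine maps. If a level-$k$ rectangle is full to depth $\ell$ and you rescale by the similarity $x\mapsto m^kx$, the level-$(k+\ell)$ subrectangles become rectangles of width $(m/n)^kn^{-\ell}$; for the rescaled configuration to be $\epsilon$-dense in $[0,1]^2$ you need $n^{\ell}\gtrsim (m/n)^k/\epsilon$, i.e.\ $\ell\ge ck$ with $c=\log(m/n)/\log n>0$. But the probability that a fixed level-$k$ rectangle is full to depth $\ell$ is $p^{mn+\cdots+(mn)^{\ell}}\le p^{(mn)^{ck}}$, doubly exponential in $k$, while only about $(mn)^k$ level-$k$ rectangles are available; the first Borel--Cantelli lemma then shows that almost surely \emph{no} such rectangle exists for large $k$, so your second Borel--Cantelli step cannot be run. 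With $\ell$ fixed (where your independence argument does work), the rescaled full configuration collapses onto a thin sliver and yields only a degenerate, essentially one-dimensional tangent. A further, more easily repaired, point is that fullness to depth $\ell$ concerns $E_{k+\ell}$, not $F$: you must additionally condition on non-extinction of each of the $(mn)^{\ell}$ subrectangles to place actual points of $F$ in them. The correct route (as in \cite{FMT}) has to produce approximate \emph{squares} that are nearly full across a range of scales, which requires a different event from ``every descendant to depth $\ell$ is selected''.
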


The proof of Proposition  \ref{dims} is sketched in Section \ref{basic}.

Let $F$ be a  subset of $\mathbb{R}^2$. We say that two points ${\bf x}_1,{\bf x}_2 \in F$ are {\it connected} or  {\it joined}, or  that ${\bf x}_1$ {\it is connected to} ${\bf x}_2$ in $F$  if they are in the same connected component of $F$ (with respect to the usual Euclidean topology). We say that  two sets $A,B \subset \mathbb{R}^2$ are {\it connected} or {\it joined} in $F$ if there are points ${\bf x}_1\in A$ and ${\bf x}_2\in B$ that are connected in $F$.

The set $F$ is a {\it horizontal} or H-{\it crossing} of the unit square $[0,1]^2$ if $\{0\}\times [0,1]$ is connected to $\{1\}\times [0,1]$. Similarly $F$ is a {\it vertical} or V-{\it crossing} if $ [0,1]\times \{0\}$ is connected to $ [0,1]\times \{1\}$. For a probability $0\leq p\leq 1$ we write $\theta_H(p)$ and $\theta_V(p)$ for the probabilities that $F$ has a horizontal or vertical crossing respectively. The following proposition, which follows by a similar argument to the statistically self-similar case, asserts that there is a positive probability of horizontal and vertical crossings if $p$ is sufficiently close to 1.

\begin{prop}\label{crit} 
There exist numbers $p_{H},p_{V}\in (0,1)$ such that:

 if $0\leq p <p_{H}$ then $\theta_H(p)= 0$ and if $p_{H}<p\leq 1$ then $\theta_H(p)> 0$, 

 if $0\leq p <p_{V}$ then $\theta_V(p)= 0$ and if $p_{V}<p\leq 1$ then $\theta_V(p)> 0$. 
\end{prop}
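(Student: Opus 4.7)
The plan is to establish monotonicity of $\theta_H$ and $\theta_V$ in $p$ and then bound the corresponding thresholds away from $0$ and from $1$.

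For monotonicity I would use a standard coupling: attach independent uniform random variables $U_{i_1,\ldots,i_k}\in[0,1]$ to each rectangle and declare $R_{i_1,\ldots,i_k}$ selected at parameter $p$ iff $U_{i_1,\ldots,i_k}\leq p$. Writing $F(p)$ for the resulting limit set, the inclusions $F(p_1)\subseteq F(p_2)$ for $p_1\leq p_2$ are immediate, so any H- or V-crossing of $F(p_1)$ is automatically a crossing of $F(p_2)$ of the same type. Hence $\theta_H$ and $\theta_V$ are non-decreasing in $p$, and setting
$$p_H=\sup\{p\in[0,1]:\theta_H(p)=0\},\qquad p_V=\sup\{p\in[0,1]:\theta_V(p)=0\},$$
monotonicity immediately supplies the two conditional statements of the proposition, provided we can verify $p_H,p_V\in(0,1)$.

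The lower bound follows from Proposition \ref{dims}(i): if $p\leq 1/(mn)$ then $F=\emptyset$ almost surely, so $\theta_H(p)=\theta_V(p)=0$ and therefore $p_H,p_V\geq 1/(mn)>0$.

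The main difficulty is to show $p_H,p_V<1$, i.e.\ to produce some $p<1$ with $\theta_H(p)>0$, and similarly for $\theta_V$. My plan here is a renormalisation argument modelled on the square-based case: fix a large integer $K$ and regard the level-$K$ rectangles as super-sites on an $n^K\times m^K$ finite grid. Call a super-site \emph{good} if its descendant configuration produces a left-to-right crossing of the corresponding rectangle that, in addition, is certified to glue with the crossing of any horizontally adjacent good super-site; the standard device is to require a distinguished (e.g.\ topmost) internal crossing whose hits on each vertical boundary edge are forced to agree with those of the neighbour. For $p$ close to $1$, the probability that a given super-site is good can be made arbitrarily close to $1$ by bounding, over a few further sub-levels, the probability that all descendant sub-rectangles survive. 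A finite-grid site-percolation comparison on the super-grid then forces a chain of good super-sites from left to right with positive probability, and the gluing rule welds their internal crossings into a genuine H-crossing of $F$, so $\theta_H(p)>0$. The analogous argument with columns yields $\theta_V(p)>0$. The main obstacle is precisely the gluing step: designing the definition of \emph{good} so that horizontally adjacent crossings of elongated super-rectangles provably meet on the common vertical edge despite the anisotropy $m>n$, since a naive bound based on independent crossings of two adjacent rectangles does not supply the required joint matching on the shared edge.
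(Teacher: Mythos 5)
Your monotone coupling and the lower bound $p_H,p_V\geq 1/(mn)>0$ are fine and match the paper's (sketched) reasoning; the paper in fact gets the slightly better bound $1/m$ by noting that $\dim_H F<1$ forces total disconnectedness, but either suffices. The substantive part of the proposition is showing $p_H,p_V<1$, and there your proposal has a genuine gap that you yourself flag: you never specify a definition of \emph{good} for which adjacent crossings provably weld on the shared edge. The paper's proof supplies exactly the missing device. Call a level-$k$ rectangle \emph{full} if at least $mn-1$ of its $mn$ level-$(k+1)$ subrectangles are selected. Deleting at most one subrectangle out of $mn\geq 6$ cannot disconnect the remaining union, nor can it separate it from the retained subrectangles of an edge-adjacent full rectangle, so the selected subrectangles of two abutting full rectangles automatically form a single connected unit --- no boundary-matching condition is needed, and the anisotropy $m>n$ plays no role. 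This replaces your unspecified gluing rule.

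There is a second problem with your scheme even granting a gluing rule: you want each super-site to be good with probability close to $1$, where goodness requires a crossing by the \emph{limit set} $F$ inside that rectangle; certifying this ``over a few further sub-levels'' by requiring all descendants to survive cannot work, since the construction continues to all depths and any finite-level survival event says nothing about connectivity of $F$ itself (and requiring survival at all depths has probability zero). The paper avoids this by iterating the full condition: a rectangle is $j$-full if at least $mn-1$ of its children are $(j-1)$-full, giving the recursion $p_j=f_p(p_{j-1})$ with $f_p(t)=mnp^{mn-1}t^{mn-1}-(mn-1)p^{mn}t^{mn}$. For $p$ near $1$ this has a stable fixed point $t_p\in(0,1)$, so with probability at least $t_p>0$ the unit square is $j$-full for every $j$; that yields a chain of abutting rectangles crossing $E_j$ in both directions for all $j$ simultaneously, and compactness of the decreasing sequence $E_j$ transfers the crossing to $F=\bigcap_j E_j$. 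Note this gives only a \emph{positive} probability of crossing, not probability near $1$, which is all the proposition needs and which sidesteps your finite-grid site-percolation comparison entirely. To repair your argument you would need both the ``all but one child'' gluing idea and a recursive fixed-point (or comparable limiting) argument in place of the finite-level certification.
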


The numbers $p_{H}$ and $p_{V}$ in Proposition \ref{crit} are termed  {\it critical probabilities} for  H- and V-crossings respectively. Adding numerical details to the proof, given in Section \ref{basic},  gives  upper bounds for the critical probabilities that are extremely close close to 1 and which are far from optimal. The method may be refined to get better values, but even for percolation on a statistically self-similar set, the best-known bounds for the critical probabilities are poor, see \cite{CCD, Don, Whi}.

We will show that the horizontal and vertical critical probabilities are equal, from which other properties of $F$ will follow.

\begin{theorem}\label{thmbrief} Let $0\leq p\leq 1$. Then $\theta_H(p)>0$ if and only if $\theta_V(p)>0$. Equivalently  $p_{H}=p_{V}$.
\end{theorem}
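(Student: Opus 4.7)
Both directions are symmetric, so I would focus on the implication $\theta_V(p)>0\Rightarrow\theta_H(p)>0$; the converse follows from an analogous argument with the roles of rows and columns swapped. My aim is to establish the stronger statement that under this hypothesis, with positive probability $F$ contains a single connected component meeting all four sides of $[0,1]^2$, which yields $\theta_H(p)>0$ and $\theta_V(p)>0$ at once.

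The basic tool is statistical self-affine inheritance. Conditional on all $k$ ancestors of a level-$k$ rectangle $R=R_{i_1,\ldots,i_k}$ and $R$ itself being selected (probability $p^k$), the set $F\cap R$ is an affinely rescaled, independent copy of $F$; so $\P(F\cap R$ has a V-crossing of $R\mid\text{ancestors selected})=\theta_V(p)$, and these events are mutually independent for rectangles with disjoint ancestor histories. As $k$ grows, level-$k$ rectangles become very wide and short (aspect ratio $(m/n)^k\to\infty$), which is naturally favourable for building horizontal spans across $[0,1]^2$. I would therefore try to combine many independent V-crossings of level-$k$ rectangles in a carefully chosen spatial pattern, using FKG-type monotonicity to lower-bound the joint probability and the wide-short geometry of the constituents to force their union to reach all four sides of $[0,1]^2$ simultaneously.

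The principal obstacle I anticipate is the matching-at-edges issue: a V-crossing of $F\cap R$ meets the top edge of $R$ at some random compact subset, and a V-crossing of the rectangle immediately above $R$ meets their shared edge at another random subset; for the two arcs to join in $F$ they must share a common point on that edge, which is not automatic. To sidestep this I would strengthen the per-rectangle event, for instance by requiring a connected subset of $F\cap R$ that meets every sub-interval of positive length on two opposite sides of $R$, making edge-matching automatic. Proving that this stronger event still has positive probability under the mere hypothesis $\theta_V(p)>0$ would be the heart of the argument and is where I expect most of the work to lie; it would combine an iterative renormalisation at successively finer scales with careful exploitation of the independence across disjoint-ancestor rectangles.
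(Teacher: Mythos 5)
You have correctly located the crux of the problem --- the edge-matching issue when trying to glue crossings of adjacent rectangles --- but your proposed fix is provably impossible, so the argument has a genuine gap. You propose to strengthen the per-rectangle event to: ``$F\cap R$ contains a connected subset meeting every sub-interval of positive length on two opposite sides of $R$.'' The closure of such a connected subset is a closed connected subset of the compact set $F\cap R$ whose intersection with (say) the top edge of $R$ is dense in that edge; being closed, it therefore contains the entire top edge, i.e.\ a horizontal line segment of $F$. But $F$ almost surely contains no axis-parallel line segments for $p<1$ (this is Lemma \ref{seg}, proved by a direct computation over rows of level-$q$ rectangles), so your strengthened event has probability $0$. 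No renormalisation at finer scales can rescue an event of probability zero, and this step is exactly where you yourself say ``most of the work'' would lie.

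The paper resolves the gluing problem by a different mechanism, and you would need something of this kind. Instead of matching boundary traces pointwise, one glues two connected sets by forcing them to cross each other inside a convex region: if one component joins $\mathbf{x}_1$ to $\mathbf{x}_3$ and another joins $\mathbf{x}_2$ to $\mathbf{x}_4$, with the four points in cyclic order on the boundary, the two components must intersect (Lemma \ref{con}). Moreover, Lemma \ref{seg} is used \emph{positively}: a component crossing a thin column cannot be a horizontal segment, so near each end it has two ``fingers'' landing in grid intervals $I^k_i, I^k_{i'}$ with $|i-i'|\ge 2$; by countable decomposition some fixed such ``link'' configuration occurs with positive probability (Lemma \ref{Col}). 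Translated and reflected copies of this link event are arranged so that consecutive links are forced by Lemma \ref{con} to share a component, and Harris/FKG gives positive probability that all finitely many of them occur simultaneously, producing a crossing of the whole square (Proposition \ref{verthor}). Note also that the paper's reduction runs in the opposite direction to yours: an H-crossing component has positive vertical extent (again by Lemma \ref{seg}), hence V-crosses some thin horizontal strip $[0,1]\times[bm^{-k},(b+1)m^{-k}]$, and the link-chaining argument then upgrades that strip crossing to a V-crossing of $[0,1]^2$; assembling an H-crossing of the unit square directly out of V-crossings of wide, short level-$k$ rectangles, as you suggest, is not how the geometry is exploited.
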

Given this theorem we will write $p_C :=p_{H}=p_{V}$ for the common critical value.

We will prove Theorem \ref{thmbrief} in Section \ref{mainbrief}. We will then refine Proposition \ref{crit} to conclude that there is a strictly positive probability of both H- and V-crossings at $p=p_{C}$ where $\theta_H(p)$ and $\theta_V(p)$ undergo a jump phase transition.

\begin{theorem}\label{phase} 
The functions $\theta_H$ and $ \theta_V$ are right-continuous and non-decreasing on $[0,1]$.
If $0\leq p <p_{C}$ then $\theta_H(p)= \theta_V(p)=0$ and if $p_{C}\leq p\leq 1$ then $\theta_H(p), \theta_V(p)> 0$. 
\end{theorem}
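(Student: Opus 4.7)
The plan is to couple the constructions at all $p$ on a common probability space, deduce monotonicity and right-continuity from this coupling, and then tackle positivity at $p_C$ as the substantive step. Assign to each address $\mathbf{i}=(i_1,\ldots,i_k)$ an independent Uniform$[0,1]$ variable $U_{\mathbf{i}}$ and declare $R_{\mathbf{i}}$ selected at parameter $p$ iff $U_{\mathbf{i}}\le p$. This simultaneously constructs $\{F(p)\}_{p\in[0,1]}$ with $F(p)\subseteq F(p')$ whenever $p\le p'$, so the H- and V-crossing events, being monotone under set inclusion, give increasing $\theta_H,\theta_V$.

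For right-continuity at $p$, take $p_j\downarrow p$. Under the coupling $F(p_j)$ is decreasing in $j$; for each level $k$ the set $E_k(p_j)$ depends on only finitely many of the $U_{\mathbf{i}}$, so eventually (in $j$, depending on $\omega$ and $k$) $E_k(p_j)=E_k(p)$ and hence $\bigcap_j F(p_j)=F(p)$. If each $F(p_j)$ has an H-crossing realised by a connected compact $C_j$, Blaschke's selection theorem produces a Hausdorff-convergent subsequence $C_{j_\ell}\to C$: the limit $C$ is compact, connected, meets both vertical sides, and any $x\in C$ is a limit of points $x_{j_\ell}\in E_k(p_{j_\ell})=E_k(p)$ for $\ell$ large, forcing $x\in E_k(p)$ for every $k$ and so $C\subseteq F(p)$. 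Hence the H-crossing events decrease almost surely to the H-crossing event at $p$, giving $\theta_H(p_j)\downarrow\theta_H(p)$ by continuity of measure; the same argument handles $\theta_V$.

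It remains to show $\theta_H(p_C)>0$, whence $\theta_V(p_C)>0$ by Theorem \ref{thmbrief}. By right-continuity $\theta_H(p_C)=\lim_{p\downarrow p_C}\theta_H(p)$, so my goal is a uniform lower bound: a constant $c>0$ such that $\theta_H(p)\ge c$ whenever $\theta_H(p)>0$. I would extract this by refining the construction used for Proposition \ref{crit}: using the FKG inequality together with the branching self-affine structure, positivity of $\theta_H(p)$ should force, with probability at least a universal constant, a ``thick'' crossing passing through a bounded number of adjacent level-one sub-columns, each of which contains an independent scaled copy of $F(p)$; iterating this control across scales then yields the uniform bound, so $\theta_H(p_C)\ge c>0$.

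The main obstacle is this last step. Because $n<m$, the level-one sub-rectangles are anisotropic and become very thin horizontally at deep levels, so the channel and concatenation estimates familiar from the self-similar $m\times m$ setting of \cite{CCD, DM} must be reworked carefully so that the geometric constants do not deteriorate as $p\downarrow p_C$. Theorem \ref{thmbrief} should itself be useful in the renormalisation, as it allows H-crossings to be traded for V-crossings (or vice versa) in intermediate sub-rectangles when stitching together the crossing of the unit square.
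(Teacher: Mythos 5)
Your coupling argument for monotonicity and your Blaschke-selection argument for right-continuity are both correct, and the latter is a genuinely different (and perfectly valid) route from the paper, which instead observes that the probability $\theta^k_H(p)$ of a crossing within $E_k$ is a polynomial in $p$ and that $\theta^k_H(p)\searrow\theta_H(p)$, so $\theta_H$ is increasing and upper-semicontinuous, hence right-continuous. Your version buys a pointwise almost-sure statement ($\bigcap_j F(p_j)=F(p)$ and convergence of crossing components); the paper's is shorter and avoids any compactness argument. Either is fine.

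The substantive content of the theorem, however, is $\theta_H(p_C),\theta_V(p_C)>0$, and here your proposal has a genuine gap: the uniform lower bound ``$\theta_H(p)\ge c$ whenever $\theta_H(p)>0$'' is exactly the hard step, and you only describe a plan for it, not a proof. Moreover the plan points in the wrong direction. FKG and chaining of independent scaled copies produce \emph{lower} bounds that are products of small probabilities, so ``iterating this control across scales'' degrades rather than stabilises the constant; it cannot by itself produce a floor that is uniform as $p\downarrow p_C$. What actually yields the dichotomy is an \emph{upper} bound of renormalisation type. The paper introduces $\tau_H(p)$, the probability of an H-crossing of the doubled rectangle $[0,1]\times[0,2]$, shows (Lemma \ref{double}, via the link-chaining of Proposition \ref{verthor}) that $\tau_H(p)>0$ iff $\theta_H(p)>0$, and then proves (Lemma \ref{tau}) that an H-crossing of one of the $n$ level-one columns of $[0,1]\times[0,2]$ forces either a vertical crossing of one of its $2m$ subrectangles or a horizontal crossing of an adjacent pair; counting these events and using $\theta_V(p)\le\theta_H(p)\le\tau_H(p)$ gives
$$\tau_H(p)\;\le\;(4m)^n\,\tau_H(p)^n,$$
whence $\tau_H(p)=0$ or $\tau_H(p)\ge(4m)^{-n/(n-1)}$. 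Combined with right-continuity of $\tau_H$ along a sequence $p_j\searrow p_C$ (chosen so that one of $\theta_H\ge\theta_V$ or $\theta_V\ge\theta_H$ holds along the whole sequence), this gives $\tau_H(p_C)>0$ and hence $\theta_H(p_C)>0$. Note also that this argument needs the ordering of $\theta_H$ and $\theta_V$ to be controlled along the sequence, and it needs the equivalence between crossings of the doubled rectangle and of the unit square; neither issue is addressed in your sketch. Without an inequality of the self-bounding form $x\le Cx^n$ with $n\ge 2$, the claimed uniform floor does not follow from positivity alone.
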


Given Theorem \ref{phase}, one can deduce more about $\theta_H(p)$ and $\theta_V(p)$ as functions of $p$ as well as about the topological nature of $F$ using methods analogous to those that have been developed for percolation on statistically self-similar sets, in particular by Meester \cite{Mee}. 

\begin{theorem}\label{cty} 
\noindent (i) If $1/mn <p<p_{C}$ then, conditional on $F \neq \emptyset$, the set $F$ is  uncountable and totally disconnected. 

\noindent (ii) If $p_{C} \leq p<1$ then, conditional on $F \neq \emptyset$, the set $F$ includes a countably infinite number of disjoint non-trivial connected components.
\end{theorem}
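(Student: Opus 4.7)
The plan is to handle the two parts separately, using in each case the self-affinity of $F$ together with the values of $\theta_H,\theta_V$ supplied by Theorem~\ref{phase}.

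For part (i), with $1/mn<p<p_C$, I would split the claim in two. Uncountability is the easier half: the count of selected rectangles at level $k$ is a supercritical Galton--Watson process with mean offspring $pmn>1$, and standard branching-process theory gives, almost surely on non-extinction (i.e.\ on $\{F\ne\emptyset\}$), uncountably many infinite descending chains of selected rectangles, each of which determines a distinct point of $F$ as the intersection of a nested sequence of compact rectangles of diameter tending to zero. For total disconnectedness, Theorem~\ref{phase} gives $\theta_H(p)=\theta_V(p)=0$, and self-affinity implies that, conditional on $R_{i_1,\ldots,i_k}$ being selected, the probability that $F\cap R_{i_1,\ldots,i_k}$ provides an H- or V-crossing of that rectangle is still $0$. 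A countable union over all $k$ and $(i_1,\ldots,i_k)$ then produces a full-probability event on which \emph{no} rectangle in the construction is ever crossed by $F$. Working on this event, I would argue by contradiction: given a connected component $C\subseteq F$ containing two distinct points, choose $k$ so large that every level-$k$ rectangle has diameter less than $\tfrac{1}{3}\diam(C)$, and use a planar separation argument (analogous to the self-similar case in \cite{Mee,CCD}) to force some level-$k$ rectangle to carry a connected subset of $F$ joining a pair of opposite edges, contradicting the no-crossing event.

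This topological step is the main obstacle in part (i): a connected piece of $C$ can meander through several adjacent level-$k$ rectangles without at first glance fully crossing any single one of them. Ruling this out requires exploiting the fact that the horizontal or vertical projection of $C$ is an interval of length at least $\diam(C)/\sqrt 2$ and so, for $k$ large, fully contains an integer multiple of $n^{-k}$ or $m^{-k}$; one then localises the argument to a single column (or row) of level-$k$ rectangles and, if necessary, passes to one finer level to extract an actual rectangle crossing from the nested structure.

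For part (ii), with $p_C\le p<1$, Theorem~\ref{phase} gives $\theta_H(p)>0$, so with positive probability $F$ contains a component joining the left and right edges of $[0,1]^2$, which is automatically non-trivial. To upgrade this single component to countably many pairwise disjoint non-trivial components I would combine this with self-affinity at all scales and an \emph{isolation} construction: for each $k$, with positive probability a selected level-$k$ rectangle $R_{i_1,\ldots,i_k}$ simultaneously has $F\cap R_{i_1,\ldots,i_k}$ containing an internal crossing component of $R_{i_1,\ldots,i_k}$, and has all its geometric neighbours at level $k$ unselected, so that this crossing component lies entirely in $R_{i_1,\ldots,i_k}$ and is disjoint from $F\setminus R_{i_1,\ldots,i_k}$. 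A Borel--Cantelli-type argument along a sequence of levels $k_j\to\infty$, using independence of disjoint sub-trees in the branching construction together with non-extinction, then yields, almost surely on $\{F\ne\emptyset\}$, infinitely many pairwise disjoint non-trivial components, one isolated inside each of a sequence of level-$k_j$ rectangles.
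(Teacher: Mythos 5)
Your part (ii) is essentially the paper's argument (isolated ``island'' rectangles carrying a crossing component, occurring with positive probability by self-affinity, multiplied up over many rectangles), though the paper counts islands at a single level where $|E_k|\geq M$ and applies Chebyshev, which neatly avoids the independence issues your Borel--Cantelli scheme along levels $k_j\to\infty$ would have to confront: the isolation events at level $k_{j+1}$ are not independent of what happened at level $k_j$ unless you take care to work in fresh sub-trees. The uncountability claim in (i) is also fine. The genuine gap is in your total-disconnectedness argument. Your plan is to show that a non-trivial component forces some single level-$k$ rectangle to be crossed between a pair of opposite edges, and to rule this out by a full-probability ``no rectangle is ever crossed'' event obtained from $\theta_H(p)=\theta_V(p)=0$ and self-affinity. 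But the topological step fails: a connected set of positive diameter need not cross any individual cell of the grid between opposite edges, at any level --- a slightly perturbed staircase curve enters and leaves every cell it meets through adjacent edges sharing a corner, and passing to a finer level reproduces the same picture, so ``passing to one finer level to extract an actual rectangle crossing'' does not rescue the argument. What a component of diameter $d>0$ \emph{does} force (via the projection argument you mention) is a horizontal crossing of some thin full-height column $[an^{-k},(a+1)n^{-k}]\times[0,1]$ or a vertical crossing of some thin full-width row. That, however, is a crossing of a $n^{-k}\times 1$ column --- a stack of $m^k$ level-$k$ rectangles --- which is \emph{not} an affine copy of the unit square in the construction, so its crossing probability is not controlled by $\theta_H(p)=0$ through self-affinity alone.

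The missing ingredient is precisely Proposition \ref{verthor} (built on Lemmas \ref{seg}, \ref{con} and \ref{Col}): a positive probability of $F$ crossing a thin column $[r,s]\times[0,1]$ horizontally already implies $\theta_H(p)>0$, by extracting a ``link'' component and chaining translated and reflected copies of it across the square with Harris' inequality. The paper's proof of (i) is exactly this: if a non-trivial component occurs with positive probability, then by countable decomposition some fixed thin column or row is crossed with positive probability, whence $\theta_H(p)>0$ or $\theta_V(p)>0$ by Proposition \ref{verthor}, contradicting $p<p_C$. Your write-up gestures at localising to a column but then tries to reduce back to a single-rectangle crossing instead of invoking (or reproving) this chaining lemma; as it stands the contradiction is not reached.
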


In Section \ref{final} we indicate some other properties of $F$ and mention some open questions.

\section{Proofs}

This section contains proofs of the results stated above.

\subsection{Proofs of basic properties}\label{basic}

\noindent {\it Proof of Proposition \ref{dims}}
(i) Let $N_p(k)$ be the number of  rectangles in $E_k$ remaining at the $k$th stage of the construction of $F$. Then $N_p(k)$ is a Galton-Watson branching process with offspring expectation $pnm$. By standard branching process properties \cite{AN}, if $pnm\leq 1$ then almost surely the process becomes extinct  so $F=\emptyset$, whereas if  $pnm> 1$ then  $N_p(k)\not\to 0$ and indeed $N_p(k)\to \infty$, so that $F\neq \emptyset$, with probability $t$, the least positive number such that $t=\sum_{j=0}^{mn}\P(N_p(k)=j)t^j =\sum_{j=0}^{mn}\binom{mn}{j} p^j(1-p)^{mn-j}t^j=(pt+1-p)^{mn}$.

(ii) These dimensions may be gleaned from more general results on dimensions of statistically self-affine sets. For box-counting dimension see \cite[Equation (1.2)]{BF},  \cite[Theorem 4.1]{LG} or \cite[Corollary 4.6]{Tro}, and for Hausdorff dimension see \cite[Equation (1.2)]{BF} or \cite{Tro}. The calculations use that the projection of $F$ onto the horizontal axis has box and Hausdorff dimensions $\min\{ 1, \log (p n m)/\log n\}$ almost surely, subject to non-extinction. As with many random constructions, the Assouad dimension is full, see \cite[Theorem 3.2]{FMT}.
$\Box$ \hfill
\medskip

\noindent {\it Proof of Proposition \ref{crit}}
We sketch this proof which is very similar to \cite[Section 2]{CCD} or \cite[Section 4]{DM}. The probability of a crossing increases with $p$, and by part (i) is 0 if $0 <p<1/m$ since then $\dim_H F <1$ and $F$ is totally disconnected. Thus it is enough to show that there is some number $p_A\in (0,1)$ such that if $p_A\leq p \leq 1$ there are, with positive probability, both H- and V-crossings. Let $3\leq n <m$. We observe that if $R$ and $R'$ are edge-adjacent level-$k$ rectangles in $E_k$,  both containing either $mn-1$ or $mn$ level-$(k+1)$ selected subrectangles in   $E_{k+1}$, then these subrectangles form a connected unit.

We say that a rectangle of $E_k$ is {\it full} if it contains $mn-1$ or $mn$ subrectangles of   $E_{k+1}$. We say that a rectangle of $E_k$ is 2-{\it full} if it contains 
$mn-1$ or $mn$ full subrectangles of   $E_{k+1}$, and, inductively,  a rectangle of $E_k$ is $j$-{\it full} if it contains $mn-1$ or $mn$ subrectangles of  $E_{k+1}$ that are $(j-1)$-full. By the above observation, if $E_0 = [0,1]^2$ is $j$-full then opposite sides of the square $[0,1]^2$ are connected by a sequence of abutting rectangles in $E_j$.

Writing  $p_j$ for the probability that $E_0$ is $j$-full, by using self-affinity and the definition of $j$-full we obtain a polynomial recursion $p_j =f_p(p_{j-1})$ where $f_p(t) =mnp^{mn-1} t^{mn-1} -(mn-1)p^{mn}t^{mn}$.  A calculation similar to \cite[Section 2]{CCD} shows that if $p$ is sufficiently close to 1, say $p_A \leq p <1$, the polynomial $f_p$ has stable fixed point $t_p\in (0,1)$ with $p_j =f_p^j(1) \to t_p$. Thus there is a  positive probability at least $ t_p>0$ that $E_0$ is $j$-full for all $j\in \mathbb{N}$, in which case there are both H- and V-crossings in $E_j$ for all $j$. As  $E_j$ is a decreasing sequence of closed sets with $F= \bigcap_{j=1}^\infty E_j$, we conclude that $F$ has both H- and V-crossings with positive probability if $p\geq p_A$.

The same conclusion holds for the $2\times m$ random construction $(m\geq 3)$ with a suitable definition of `full'.
 \hfill $\Box$

\subsection{Proof of Theorem 2.3}\label{mainbrief}

The following simple ideas are key to the proofs.. 

$\bullet$ If an event of positive probability is a finite or countable union of sub-events, then at least one of the sub-events has positive probability.

$\bullet$ $F$ is {\it statistically self-affine}, that is for a level-$k$ rectangle $R$,  the random set $F_{k+1}\cap R$ has the same distribution as $\phi(F)$ (with the obvious translation to $R$) where $\phi$ is the affine map $\phi(x,y)= (n^{-k}x, m^{-k}y)$. Similarly, $F\cap R$ and $F\cap R'$ have identical distribution for level-$k$ rectangles $R$ and $R'$.

$\bullet$ Limiting events such as crossings occur with positive probability in $F$ if and only if  they occur with positive probability in $F_k$ for some, and thus for all, $k\in \mathbb{N}$ (this follows since there is a positive probability of selecting every level-$j$ rectangle for all $1\leq j\leq k-1$).

$\bullet$ Harris' inequality (or the FKG inequality), in the form that if $A,B$ are increasing events then $\P(A\cap B) \geq \P(A)\P(B)$, see \cite[Section 2.2]{Gri} or\cite{CCD}. Recall that an event $C$ is increasing if  $I_C(\omega) \leq I_C(\omega')$ whenever $\omega \subseteq \omega'$, where $I_C$ is the indicator function of $C$, that is if an event occurs for a selection of rectangles it also occurs for any larger selection. Many of the events we consider here, such as the occurrence of crossings, are increasing.

We first note that $F$  does not contain line segments parallel to the axes.

\begin{lemma}\label{seg}
Let $0\leq p<1$. Almost surely $F$ contains no horizontal and no vertical line segments. The same is true for $F_k$ for all $k$.
\end{lemma}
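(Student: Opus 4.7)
The plan is to prove the lemma by a first-moment (union bound) argument on long horizontal runs of selected rectangles at each level, separating out a countable set of exceptional heights where this naive bound breaks down.

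Fix $\ell > 0$ and suppose that $F$ contains a horizontal segment $[a, a+\ell] \times \{y_0\}$. If $y_0 \neq j m^{-k}$ for every $k, j$, then $y_0$ lies in the interior of a unique level-$k$ row for each $k$, and so the condition $[a, a+\ell] \times \{y_0\} \subset E_k$ forces a run of at least $L_k := \lfloor \ell n^k \rfloor$ consecutive level-$k$ rectangles of that row to lie in $E_k$. Each level-$k$ rectangle is independently selected with probability $p$, so any specific set of $L_k$ level-$k$ rectangles lies in $E_k$ with probability at most $p^{L_k}$; summing over the $m^k$ rows and at most $n^k$ starting positions gives
\begin{equation*}
\P\bigl(\text{some row at level } k \text{ contains a run of length } L_k \text{ in } E_k\bigr) \leq (mn)^k\, p^{L_k}.
\end{equation*}
This tends to $0$ as $k \to \infty$ since $p^{L_k}$ decays super-exponentially in $k$ while $(mn)^k$ grows only exponentially, so the generic case contributes probability $0$.

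The remaining case concerns the countable family of heights $y_0 = j m^{-k_0}$, for which $y_0$ sits on the common boundary of two adjacent rows at every level $k \geq k_0$. For each fixed such $y_0$ I would analyse the horizontal slice $F \cap ([0,1] \times \{y_0\})$ directly: tracing the level-$k$ rectangles adjacent to $y_0$ from above and from below produces two disjoint descending chains of rectangles, and (using that once one chain fails to be selected, the other must be selected at every subsequent level) the slice equals the union $U \cup L$, where $U$ and $L$ are the sets of $x$ for which the upper, respectively lower, chain through $(x, y_0)$ is selected at every level. Each of $U$ and $L$ is distributed as a one-dimensional fractal percolation with parameters $(n, p)$, and for $p < 1$ a standard first-moment estimate shows that such a percolation is almost surely closed and nowhere dense in $[0, 1]$. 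By the Baire category theorem, the union of two closed nowhere dense subsets of $[0, 1]$ is itself nowhere dense and hence contains no interval, so the slice almost surely contains no sub-interval. A countable union over this family of $y_0$, and then over $\ell = 1/N$, completes the argument for horizontal segments of $F$.

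The vertical case is symmetric with the roles of $n$ and $m$ swapped, and the bound $(mn)^k p^{\lfloor \ell m^k \rfloor}$ converges even faster since $m > n$. For $F_k$ the same argument applies verbatim; alternatively one may observe that $F_k$ restricted to any level-$(k-1)$ rectangle is an affine copy of $F$ and union-bound over the $(mn)^{k-1}$ such rectangles. The main obstacle is the boundary case at $m$-adic heights: a naive pigeonhole refinement of the consecutive-run bound only succeeds for $p < 1/4$, and it is essential to separate out this countable exceptional set and appeal to Baire category in order to handle all $p < 1$.
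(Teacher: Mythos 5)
Your proof is correct, and for non-$m$-adic heights it is essentially the paper's argument: a union bound over rows forcing a full run of selected level-$k$ rectangles, where the doubly exponential decay of $p^{\lfloor \ell n^k\rfloor}$ beats the exponential factor $(mn)^k$. Where you genuinely diverge is in the boundary case $y_0=jm^{-k_0}$. The paper dispatches this with a one-line independence computation: for a segment of horizontal extent $n^{-j}$ sitting on an $m$-adic line, each of the $n^{q-j}$ level-$q$ horizontal positions requires at least one of the two abutting rectangles (above or below) to be selected, and these $n^{q-j}$ pair-events are independent, giving probability at most $(2p-p^2)^{n^{q-j}}\to 0$ for every $p<1$; a countable union over such segments finishes the case. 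You instead decompose the slice $F\cap([0,1]\times\{y_0\})$ as $U\cup L$, identify each of $U$ and $L$ with a base-$n$ one-dimensional fractal percolation, and argue each is almost surely closed and nowhere dense, so their union contains no interval. This is valid: your justification that $(x,y_0)\in F$ iff one of the two nested chains survives at every level is exactly what is needed, and the first-moment bound $p^{n^j}\to 0$ for a fixed $n$-adic interval surviving entirely does show the 1-D percolation contains no interval a.s. Two small remarks: you do not need Baire category --- a union of \emph{two} closed nowhere dense sets is nowhere dense by elementary means --- and your closing comment that a ``naive pigeonhole refinement'' only works for $p<1/4$ suggests you missed the abutting-pairs computation, which handles all $p<1$ directly and is considerably lighter than the 1-D percolation detour. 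Your structural description of the boundary slice is arguably more informative, but the paper's route is shorter and self-contained.
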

\begin{proof}
We show that $F$ contains no horizontal line segment, the vertical case is similar.

First consider a line segment $S$ of the form 
$$[an^{-j}, (a+1)n^{-j}] \times \{b m^{-k}\}, \ \  j,k\in \mathbb{N},\ 0\leq a\leq n^{j}-1, \ 0\leq b\leq m^{k}.$$Then for $q\geq j$ there are $n^{q-j}$ level-$q$ rectangles $R_{i_1,\ldots,i_q}$ that touch $S$ on each side. If $S\subset F$ then one or the other of every such abutting pair of level-$q$ rectangles must be selected, and the probability of this is $(1-(1-p)^2)^{n^{q-j}}=(2p-p^2)^{n^{q-j}} \to 0$ as $q\to \infty$. Thus $\P(S \subset F)=0$ for all such $S$, and there are countably many segments $S$ of this form.

Now let $j \geq 1$ and $0\leq a \leq m^{j}-1$; we consider rows of level-$q$ rectangles in the column  $[an^{-j}, (a+1)n^{-j}]\times [0,1]$. For $q\geq j$ the row 
\be\label{rowrect}
[cn^{-q}, (c+1)n^{-q}] \times [bm^{-q}, (b+1)m^{-q}], \ \ an^{q-j}\leq c\leq (a+1)n^{q-j}-1
\ee
contains $n^{q-j}$ level-$q$ rectangles each of which is selected independently with probability $p$. The probability of all the rectangles in this row being selected is $p^{n^{q-j}}$. There are $m^q$ such rows of the form \eqref{rowrect} with  $0\leq b\leq m^q -1$ which are independent of each other, so the probability of at least one such full row being selected is $1- (1-p^{n^{q-j}})^{m^q}\to 0$ as $q\to\infty$, since $m^q \log(1-p^{n^{q-j}})  \sim -m^q p^{n^{q-j}}\to 0$. 

A line segment  
$[an^{-j}, (a+1)n^{-j}] \times \{y\}$ in $F$ where $y$ is not of the form $b m^{-k}$ must lie in 
some row of selected rectangles of the form \eqref{rowrect} for every $q\geq j$, but almost surely there is no such sequence of rows of selected rectangles.

Thus almost surely, $F$ contains no line segment of the form  $[an^{-j}, (a+1)n^{-j}] \times \{y\}$ for each, and thus for all, $j$ and $a$. Every horizontal line segment contains some interval of this form, so $F$ contains no horizontal line segments. The argument for $F_k$ is similar, just considering $q\geq k$.
\end{proof}

We need the following intuitively obvious result on intersecting connected components of a compact set.

\begin{lemma}\label{con}
Let $K$ be a non-degenerate compact convex subset of $\mathbb{R}^2$ with perimeter curve $\partial K$. Let ${\bf x}_1,{\bf x}_2,{\bf x}_3,{\bf x}_4 $ be (not necessarily distinct) points in clockwise order on  $\partial K$. Let $F$ be a compact subset of $K$. If ${\bf x}_1$ and ${\bf x}_3$ are connected in $F$ and ${\bf x}_2$ and ${\bf x}_4$ are connected in $F$ then ${\bf x}_1,{\bf x}_2,{\bf x}_3,{\bf x}_4 $ are all connected in $F$, i.e. are all in the same connected component of $F$.
\end{lemma}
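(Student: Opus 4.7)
The plan is to argue by contradiction, reducing to the classical fact that two disjoint arcs in a disk joining opposite pairs of four cyclically-ordered boundary points cannot coexist. First, apply a homeomorphism of $\mathbb{R}^2$ so that $K=[0,1]^2$ with the $\mathbf{x}_i$ at the four corners in cyclic order; the degenerate cases in which adjacent $\mathbf{x}_i$ coincide follow immediately from the hypotheses, so we may assume the four corners are distinct. Suppose for contradiction that the component $C_1$ of $F$ containing $\mathbf{x}_1,\mathbf{x}_3$ is distinct from the component $C_2$ containing $\mathbf{x}_2,\mathbf{x}_4$, so $C_1\cap C_2=\emptyset$.

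The first step is to upgrade $C_1,C_2$ to a pair of disjoint arcs in $K$. Since $F$ is a compact metric space, connected components coincide with quasi-components, so there exist clopen sets $A_1,A_2\subset F$ with $C_i\subseteq A_i$, $A_1\cap A_2=\emptyset$ and $A_1\cup A_2=F$. Being disjoint compact subsets of $\mathbb{R}^2$, $A_1$ and $A_2$ lie in disjoint open sets $V_1,V_2\subset\mathbb{R}^2$. The sets $W_i:=V_i\cap K$ are open in the locally path-connected space $K$, and $C_i\subset W_i$ is connected, so the two corners lying in $C_i$ belong to the same (open and path-connected) component of $W_i$. Hence there is an arc $\gamma_i\subset W_i$ joining them, and $\gamma_1\cap\gamma_2=\emptyset$ since $W_1\cap W_2=\emptyset$. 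A small perturbation places each $\gamma_i$ in $\mathrm{int}(K)\cup\{\mathbf{x}_{2i-1},\mathbf{x}_{2i+1}\}$ while preserving disjointness.

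To derive the contradiction, concatenate $\gamma_1$ with the boundary arc $\alpha\subset\partial K$ from $\mathbf{x}_3$ through $\mathbf{x}_2$ back to $\mathbf{x}_1$ to form a Jordan curve $J$. By the Jordan curve theorem, $\mathbb{R}^2\setminus J$ has two components, and the bounded one, $R$, is contained in $K$. The other boundary arc (through $\mathbf{x}_4$) is connected and meets $J$ only at $\mathbf{x}_1,\mathbf{x}_3$, so it lies entirely in the unbounded component, placing $\mathbf{x}_4$ there. On the other hand, near the interior point $\mathbf{x}_2$ of $\alpha$, $J$ locally looks like the L-corner of $\partial K$ at $\mathbf{x}_2$, and $K\setminus J$ locally coincides with $R$; thus $\gamma_2$ enters $R$ immediately on leaving $\mathbf{x}_2$. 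However, $\gamma_2\cap J=\{\mathbf{x}_2\}$ (because $\gamma_2\cap\gamma_1=\emptyset$ and $\gamma_2\cap\partial K=\{\mathbf{x}_2,\mathbf{x}_4\}$), so $\gamma_2\setminus\{\mathbf{x}_2\}$ is a connected subset of $\mathbb{R}^2\setminus J$ containing both a point of $R$ and the point $\mathbf{x}_4$ of the unbounded component --- a contradiction.

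The main technical obstacle is the passage from the abstract connectedness of $C_1,C_2$ to honest arcs $\gamma_1,\gamma_2$. This step relies essentially on $F$ being compact (so that components equal quasi-components and can be separated by clopen neighbourhoods), together with the local path-connectedness of the plane. Once the arcs are in hand, the Jordan-curve argument is routine.
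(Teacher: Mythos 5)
Your proof is correct, but it takes a genuinely different route from the paper's. The paper also argues by contradiction, but instead of producing honest arcs it uses the characterisation of compact connected sets as $\epsilon$-connected for every $\epsilon>0$ (citing Newman): the two components $A\ni{\bf x}_1,{\bf x}_3$ and $B\ni{\bf x}_2,{\bf x}_4$, if disjoint, are compact and hence separated by some distance $d>0$; one then takes $d/3$-chains in $A$ and in $B$ joining the respective pairs of points, observes that the resulting polygonal curves must cross at some ${\bf w}\in K$ (this interleaving/crossing fact is asserted rather than derived from the Jordan curve theorem), and concludes that $A$ and $B$ come within $2d/3$ of each other, a contradiction. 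Your argument replaces the $\epsilon$-chain approximation by the clopen separation of distinct components of a compact set (components equal quasi-components), upgrades this to genuinely disjoint arcs via local path-connectedness of $K$, and then makes the topological crossing step fully explicit through the Jordan curve theorem. What the paper's approach buys is brevity and the avoidance of any perturbation or arc-extraction: the metric estimate does all the work once the crossing of two polygonal curves is granted, and for polygonal curves that crossing fact has an elementary proof. What your approach buys is a rigorous and self-contained treatment of exactly the point the paper leaves implicit, at the cost of invoking heavier general-topology facts (quasi-components, path-to-arc, the perturbation into the interior) whose details you correctly flag as routine but do not fully write out. Both proofs handle the coincident-point degeneracies in essentially the same way.
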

\begin{proof}
If two of the ${\bf x}_i$ coincide then the conclusion follows since connected components that intersect in a point 
must belong to a single connected component.

Otherwise, suppose ${\bf x}_1,{\bf x}_3$ are in a connected component $A$ of $F$ and  ${\bf x}_2,{\bf x}_4$ are in a connected component $B$. Then $A,B$ are compact, so if they are disjoint they are separated by a positive distance $d$. A compact set $E$ is connected if and only if for all $\epsilon >0$ it is $\epsilon$-connected, that is for all ${\bf y}_0, {\bf y}\in E$ there is an $\epsilon$-chain of points ${\bf y}_0,{\bf y}_1,\ldots,{\bf y}_n = {\bf y}$ in $E$ such that $|{\bf y}_i-{\bf y}_{i+1}|< \epsilon$ for all $0\leq i\leq n-1$, see \cite{New}. Thus we may find a $d/3$-chain in $A$ joining ${\bf x}_1$ and ${\bf x}_3$ and one in $B$ joining ${\bf x}_2$ and ${\bf x}_4$. The polygonal curves defined by these chains must cross at some point ${\bf w}\in K$, so there are points in both $A$ and $B$ within distance $d/3$ of ${\bf w}$, so $A$ and $B$ have separation at most $2d/3$, a contradiction. Hence $A$ and $B$ are not disjoint, so are in the same connected component of $F$.
\end{proof}

We now show that if there is a positive probability of $F$ crossing horizontally a (perhaps narrow) column, we can find a `link' component of $F$ of a particular form crossing the column, and then join up several such links to obtain a crossing of $[0,1]^2$. This extends a technique used in \cite{DM}.

For $k\in \mathbb{N}$ and $i\in \mathbb{Z}$ we write $I^k_i$ for the interval
\be\label{Iki}
 I^k_i := [ im^{-k},  (i+1)m^{-k}] \subset\mathbb{R}.
 \ee
 For $0\leq x\leq 1$ let $L_x$ denote the vertical line-segment $\{x\}\times [0,1]$.

\begin{lemma}\label{Col} Let $0<p\leq 1$ and let $q\geq1$ be an integer. Suppose that there is positive probability that $F$ includes a horizontal crossing of the column $[0,n^{-q}] \times [0,1]$. Then there exist an integer $k >q$, intervals $ I^k_i,  I^k_j, I^k_{i'},  I^k_{j'}$ with $|i-i'|\geq 2$ and  $|j-j'|\geq 2$ , and integers $1\leq a < \frac{1}{2}n^{k-q}<b<n^{k-q}$   such that with positive probability there is a connected component $F_0$ of $F$ such that:

$(i)$  $F_0$ intersects the line segments $L:=\{0\}\times I^k_i,\, L':=\{an^{-k}\}\times I^k_{i'}, \, R':=\{bn^{-k}\}\times I^k_{j'}, \, R:=\{n^{-q}\}\times I^k_{j}$; 

$(ii)$  $L$ is connected to $\L'$ in 
$F_0 \cap ([0,an^{-k}] \times [0,1])$;  

$(iii)$  $R'$ is connected to $R$ in 
$F_0\cap ([bn^{-k},n^{-q}] \times [0,1])$;

\noindent   see Figure \ref{narrow}.  
\end{lemma}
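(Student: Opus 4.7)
My plan is to combine the positive-probability hypothesis with the absence of horizontal line segments in $F$ (Lemma \ref{seg}) and Harris' inequality. By assumption, the event $E$ that $F$ has a horizontal crossing of the column $[0,n^{-q}]\times[0,1]$ has positive probability. On $E$, any crossing component $F_0$ meets the two vertical edges in non-empty compact sets, and hence meets some level-$k$ intervals $\{0\}\times I^k_i$ and $\{n^{-q}\}\times I^k_j$. Decomposing $E$ as a countable union over $(k,i,j)$ of these sub-events, one has positive probability, so I fix a scale $k_0$ and indices $i,j$ for which with positive probability such a crossing component $F_0$ exists meeting both of these segments.

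Next, I would refine this to produce the required deviation at intermediate vertical lines. For $k\geq k_0$, $1\leq a<n^{k-q}/2$ and $i'$ with $|i-i'|\geq 2$, let $D^{k,a,i'}$ be the event that $F_0$ meets $\{an^{-k}\}\times I^k_{i'}$ with $L:=\{0\}\times I^k_i$ connected to $\{an^{-k}\}\times I^k_{i'}$ inside $F_0\cap[0,an^{-k}]\times[0,1]$. The sub-component of $F_0\cap[0,n^{-q}/2]\times[0,1]$ containing $L$ cannot be contained in a single horizontal line (Lemma \ref{seg}), so almost surely on $E$ its vertical oscillation is strictly positive; hence for $k$ large enough (depending on $\omega$) it must reach a level-$k$ interval $I^k_{i'}$ with $|i-i'|\geq 2$ at some intermediate $an^{-k}$ in the left half. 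A countable-union argument over $(k,a,i')$ then isolates specific values for which $\P(D^{k,a,i'})>0$. An identical argument on the right half yields corresponding $(k,b,j')$ with positive probability, and by passing to a common larger $k$ both can be arranged at the same scale.

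Finally, I would stitch the left- and right-half deviations into a single configuration. Both are increasing events in the underlying rectangle selections, so Harris' inequality gives positive probability of their intersection, together with the middle-strip crossing automatically provided by the connectedness of the column-crossing $F_0$. Lemma \ref{con} applied to the compact convex middle strip then guarantees that the left-strip sub-component containing $L$, the middle crossing, and the right-strip sub-component containing $R$ all lie in a single connected component $F_0$ of $F$, yielding the configuration described in conditions (i), (ii) and (iii) with positive probability.

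I expect the main obstacle to be the quantitative step in the second paragraph: converting the qualitative fact that $F_0$ is not a horizontal segment into a positive-probability statement about deviation by at least two level-$k$ intervals at a specific intermediate position $an^{-k}$. Because the oscillation of $F_0$ may vary across sample paths and no single scale captures all deviations a priori, the resolution must go through a careful diagonal/countable-union decomposition. A further technical point is ensuring that the sub-component realising the deviation in the left strip is genuinely part of the same component $F_0$ that crosses the whole column; this is handled by working consistently with the connected component of $F_0\cap[0,an^{-k}]\times[0,1]$ that contains $L$, rather than with arbitrary components of $F$ in the strip.
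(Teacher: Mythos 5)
Your overall shape (use Lemma \ref{seg} plus a countable decomposition over scales and indices) is the right one, but there are two genuine gaps. First, the step ``its vertical oscillation is strictly positive; hence for $k$ large enough it must reach a level-$k$ interval $I^k_{i'}$ with $|i-i'|\geq 2$ at some intermediate $an^{-k}$'' is a non sequitur. Positive oscillation of the component of $L$ inside the half-column $[0,\tfrac12 n^{-q}]\times[0,1]$ is witnessed by \emph{some} point of that component, but that point need not belong to the component of $L$ inside the narrower strip $[0,an^{-k}]\times[0,1]$: it may be joined to $L$ only through the region $x>an^{-k}$, so the sub-component you actually work with could still meet the line $x=an^{-k}$ only very close to the starting height. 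The paper closes exactly this hole with a sharper use of Lemma \ref{seg}: with $(0,y)\in F_0$, almost surely $[0,\tfrac12 n^{-q}]\times\{y\}\not\subset F$, and since $F$ is closed and the abscissae $an^{-k}$ are dense, some specific point $(an^{-k},y)\notin F$; the component of $(0,y)$ in $F_0\cap([0,an^{-k}]\times[0,1])$ must still meet the line $x=an^{-k}$ (as $F_0$ crosses the whole column), hence meets it at some $y'\neq y$, and one then enlarges $k$ to force $|i-i'|\geq 2$. Your own diagnosis that the difficulty is resolved by ``a careful diagonal/countable-union decomposition'' misses this: the countable union is the easy part, and the missing ingredient is this deterministic localisation of the deviation.

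Second, the Harris/Lemma \ref{con} combination in your last paragraph does not deliver the conclusion. After intersecting the left-deviation and right-deviation events you only know that \emph{some} component realises each deviation; these may be different components, and Lemma \ref{con} cannot merge them, because two left-to-right crossings of a rectangle (or a deviation component and a crossing component) need not have boundary points in the alternating cyclic order that the lemma requires --- two disjoint roughly horizontal crossings at different heights satisfy your hypotheses but meet no common component. The detour is also unnecessary: in the paper both deviations are derived, almost surely on the original positive-probability crossing event, for the \emph{same} crossing component $F_0$ (left end and right end treated symmetrically, then a common $k$ taken), so conditions (i)--(iii) hold simultaneously for one $F_0$ with random parameters, and a single countable decomposition over the tuples $(k,a,b,i,i',j,j')$ fixes a choice of positive probability. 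Harris' inequality is only needed later, in Proposition \ref{verthor}, to chain such configurations together.
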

\begin{figure}[h]
\begin{center}
\includegraphics[width = 0.5\textwidth]{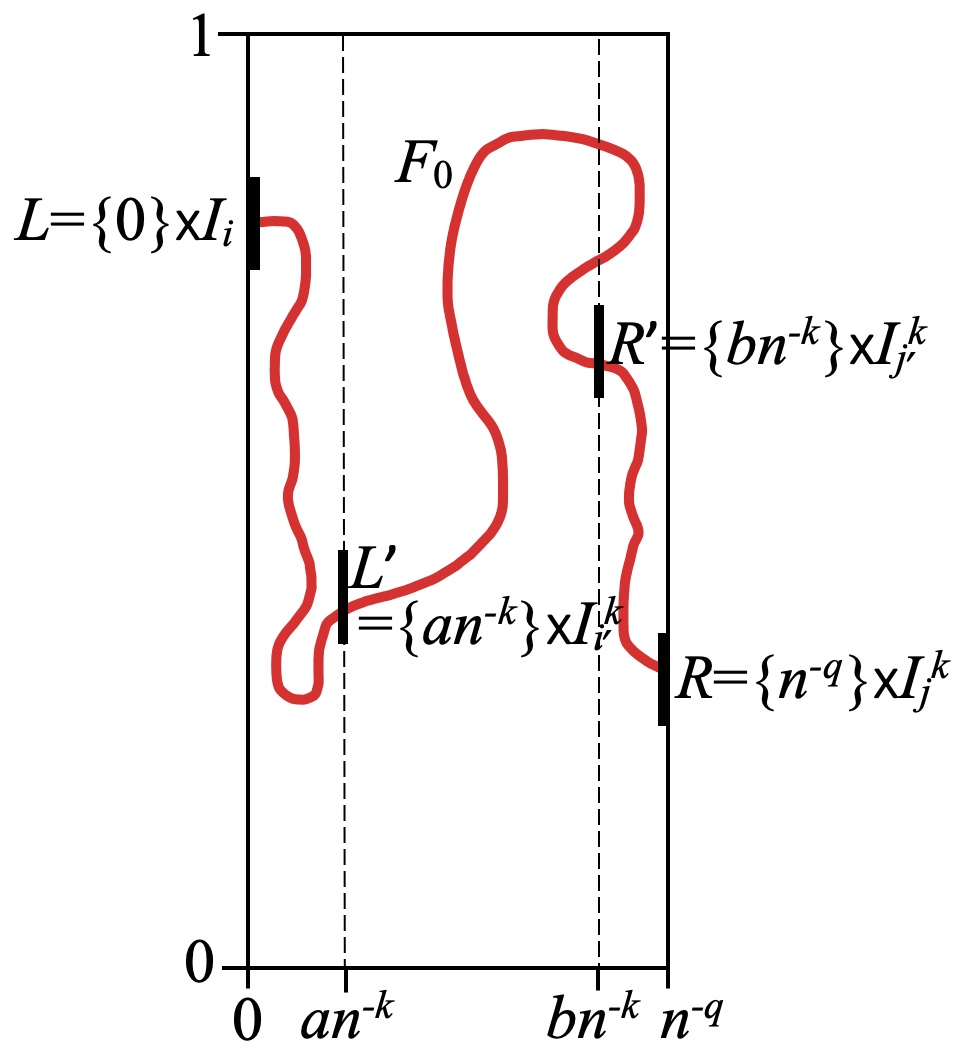}
\caption{Line segments $L,L',R',R$ connected by a component $F_0$ (represented here by a curve for illustrative convenience)}\label{narrow}
\end{center}
\end{figure}
\begin{proof}
If $F$ is a compact set that includes an H-crossing of $[0,n^{-q}] \times [0,1]$, there is a connected component $F_0\subset F\cap ([0,n^{-q}] \times [0,1])$ that gives a H-crossing of $[0,n^{-q}] \times [0,1]$.

Let $y\in [0,1]$ be such that the point $(0,y) \in F_0$. By Lemma \ref{seg} the segment $[0,\frac{1}{2}n^{-q}]\times \{y\}$ is almost surely not contained in the compact set $F$, so as the points $\{an^{-k}\}_{k,a}$ are dense in $[0,1]$ there are almost surely $k,a \in \mathbb{N}$ with $0< an^{-k}<\frac{1}{2}n^{-q}$ such that $(an^{-k},y)\notin F$. Since  $L_{an^{-k}}$ must intersect $F_0$ we may find $y'\in [0,1]$ with $(an^{-k},y') \in F_0$ and $y'\neq y$ such that $(an^{-k},y')$ is connected to $(0,y)$ in $F_0\cap([0,an^{-k}]\times [0,1])$. There are intervals $ I^k_i, I^k_{i'}$ with $0\leq i,i'\leq m^k -1$ and $(0,y) \in \{0\} \times I^k_i $ and $(an^{-k},y')\in \{an^{-k}\} \times I^k_{i'} $. If $|i-i'| <2$ then we may replace $k$ with a larger $k'$ and find smaller intervals $ I^{k'}_{i''} \ni y \text{ and } I^{k'}_{i'''}\ni y'$ with $|i''-i'''| \geq 2$ and $a' = an^{k'-k}$ which satisfy ($ii$).

In the same way, working near the right-hand side of the rectangle $[0,n^{-q}] \times [0,1]$, we may obtain intervals  $I^k_{j}$ and $I^k_{j'}$ and an integer $b$ satisfying ($iii$). Finally, to ensure a common value of $k$ in ($ii$) and ($iii$), we take the larger of the values of $k$ obtained in ($ii$) and ($iii$) and reduce the lengths of the intervals  $ I^k_i,  I^k_{i'}, I^k_j,   I^k_{j'}$ as appropriate.

Thus if there is a positive probability that the random set $F$ provides an H-crossing of $[0,n^{-q}] \times [0,1]$ then there are integers $k, a, b$ and intervals $ I^k_i,  I^k_{i'},  I^k_{j'}, I^k_j$ for which a connected   $F_0 \subset F$ satisfies (i)-(iii). But there are countably many possibilities for these integers and intervals, so at least one combination occurs with positive probability.
\end{proof}

We proceed as in \cite[Proof of Lemma 5.1]{DM} to link together columns of the form considered in Lemma \ref{Col} to obtain a crossing of the unit square.

\begin{prop}\label{verthor} Let $0<p\leq 1$ and let $[r,s]\subset [0,1]$ be an interval. If there is a positive probability that $F$ gives an H-crossing of the column $[r,s]\times [0,1]$ then $\theta_H(p)>0$.  Similarly, if there is a positive probability that $F$ provides a V-crossing of the rectangle $[0,1]\times [r,s]$ then $\theta_V(p)>0$.
\end{prop}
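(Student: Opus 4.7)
The plan is to build an H-crossing of $[0,1]^2$ by placing a translated copy of the link configuration of Lemma \ref{Col} in each of the $n^q$ adjacent level-$q$ columns tiling $[0,1]^2$ and fusing them together, in the spirit of \cite[Lemma~5.1]{DM}.

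First I would reduce to the case $[r,s]=[0,n^{-q}]$. For $q$ sufficiently large $[r,s]$ contains a level-$q$ sub-interval $[\ell n^{-q},(\ell+1)n^{-q}]$. A standard plane-topological duality --- a compact subset of a rectangle either H-crosses it or its complement V-crosses it --- combined with connectedness of an H-crossing component shows that the restriction of such a component of $F$ in $[r,s]\times[0,1]$ to any sub-column $[\ell n^{-q},(\ell+1)n^{-q}]\times[0,1]$ must itself H-cross that sub-column, and translation by $\ell n^{-q}$ (a level-$q$ shift, which preserves the distribution of $F$) moves this to $[0,n^{-q}]\times[0,1]$. Lemma \ref{Col} then supplies parameters $(k,a,b,i,i',j,j')$ and a probability $\alpha>0$ of a link component $F_0$ in $[0,n^{-q}]\times[0,1]$ meeting the segments $L,L',R',R$. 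By statistical self-affinity the analogous link occurs in every column $C_\ell=[\ell n^{-q},(\ell+1)n^{-q}]\times[0,1]$ with probability $\alpha$, and since these events depend on disjoint families of rectangles they are independent, so all $n^q$ links $F_0^\ell$ co-occur with probability at least $\alpha^{n^q}>0$.

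On this joint event the task is to merge $\bigcup_\ell F_0^\ell$ into a single connected subset of $F$ that meets both $\{0\}\times[0,1]$ and $\{1\}\times[0,1]$. Adjacent components $F_0^\ell,F_0^{\ell+1}$ meet the shared edge $\{(\ell+1)n^{-q}\}\times[0,1]$ at $R_\ell\subset I^k_j$ and $L_{\ell+1}\subset I^k_i$, which in general lie at different heights, so direct merging at the shared edge is not automatic. To force fusion I would apply Lemma \ref{con} on the rectangle $K_\ell=[\ell n^{-q}+an^{-k},(\ell+1)n^{-q}+bn^{-k}]\times[0,1]$, whose left and right edges carry $L'_\ell$ at height $I^k_{i'}$ and $R'_{\ell+1}$ at height $I^k_{j'}$; the separations $|i-i'|\geq 2$ and $|j-j'|\geq 2$ from Lemma \ref{Col} provide the vertical slack needed to arrange four points on $\partial K_\ell$ in clockwise order whose two diagonal connecting pairs are furnished by the links $F_0^\ell,F_0^{\ell+1}$ interlocked across $K_\ell$. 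Iterating across $\ell=0,\ldots,n^q-1$ yields one connected component of $F$ H-crossing $[0,1]^2$, so $\theta_H(p)>0$. The V-crossing statement follows by swapping horizontal and vertical throughout and invoking the analogous vertical version of Lemma \ref{Col}.

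The main obstacle is the fusion step. Because $R_\ell$ and $L_{\ell+1}$ sit on an interior vertical line of $K_\ell$ rather than on $\partial K_\ell$, the crossing hypothesis of Lemma \ref{con} is not directly supplied by the links themselves; one must exploit the interior segments $L',R'$ together with the two-apart conditions $|i-i'|,|j-j'|\geq 2$ to rearrange into an interleaved clockwise quadruple on $\partial K_\ell$ whose diagonal connecting pairs genuinely lie in the two adjacent links. Verifying this topological arrangement --- and ruling out that the links can coexist without fusing through some degenerate alignment --- is the delicate geometric heart of the argument.
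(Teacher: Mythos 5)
Your reduction to a single level-$q$ column and your appeal to Lemma \ref{Col} match the paper, but the chaining step has a genuine gap, and it is structural rather than a matter of checking a delicate configuration. You tile $[0,1]^2$ by pure translates $C_\ell=[\ell n^{-q},(\ell+1)n^{-q}]\times[0,1]$, so the links $F_0^\ell$ and $F_0^{\ell+1}$ are confined to closed columns whose intersection is the single vertical line $x=(\ell+1)n^{-q}$, which they are only guaranteed to touch in the disjoint height intervals $I^k_j$ and $I^k_i$ respectively; nothing forces two such sets to share a point, and generically they will not. Your proposed repair via $K_\ell=[\ell n^{-q}+an^{-k},(\ell+1)n^{-q}+bn^{-k}]\times[0,1]$ cannot invoke Lemma \ref{con}: that lemma needs two sets each connecting a pair of points of $\partial K_\ell$ in interleaved cyclic order, but $F_0^\ell$ never reaches the right edge of $K_\ell$ (which lies at $(\ell+1)n^{-q}+bn^{-k}$, strictly inside $C_{\ell+1}$) and $F_0^{\ell+1}$ never reaches the left edge (strictly inside $C_\ell$), so neither link crosses $K_\ell$ and there is no interleaved quadruple on $\partial K_\ell$ to feed into the lemma. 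You correctly identify fusion as the heart of the matter, but with non-overlapping translates the links really can coexist without fusing.

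The paper's proof resolves exactly this by making consecutive copies overlap, alternating a reflection $\rho$ in the line $x=\tfrac12(bn^{-k}+n^{-q})$ with a translation $\tau$ by $(b-a+n^{k-q})n^{-k}$. Then $K$ and $\rho(K)$ share the strip $[bn^{-k},n^{-q}]\times[0,1]$: on its two bounding vertical lines the first link runs from $R'$ (height $I^k_{j'}$) to $R$ (height $I^k_j$) while the reflected link runs from $\rho(R)$ (height $I^k_j$) to $\rho(R')$ (height $I^k_{j'}$), and since $|j-j'|\ge 2$ the four endpoints interleave on the boundary of the strip, so Lemma \ref{con} forces the two links to meet; the overlap of $\rho(K)$ with $\tau(K)$ is handled the same way using $|i-i'|\ge 2$. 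This is precisely where conditions (ii) and (iii) of Lemma \ref{Col} --- connection \emph{within} the narrow end strips --- are used, whereas your argument never uses them. A further consequence of the overlap is that the chained events are no longer independent, so the paper bounds the probability that all $2\ell+1$ of them occur by $p_0^{2\ell+1}$ via Harris' (FKG) inequality for increasing events, in place of your independence argument. If you replace your translated disjoint columns by these overlapping reflected-and-translated copies, the rest of your outline (the initial reduction and the final positive-probability conclusion) goes through.
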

\begin{figure}[h]
\begin{center}
\includegraphics[width = 0.65\textwidth]{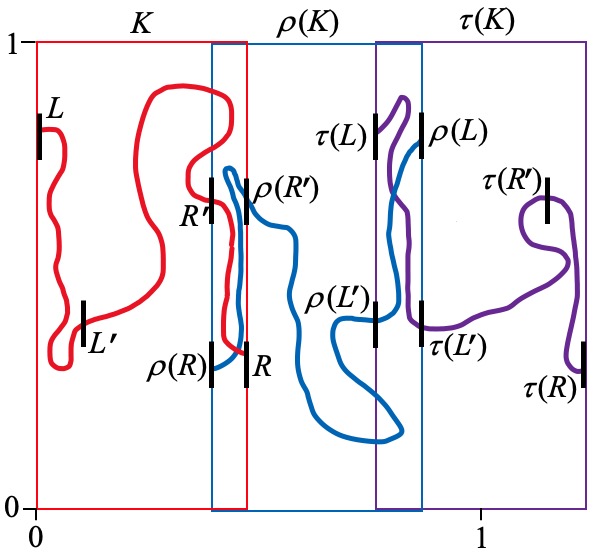}
\caption{Constructing a  horizontal crossing of $[0,1]^2$ given  crossings of vertical strips.}\label{wide}
\end{center}
\end{figure}
\begin{proof}
Take $q\in \mathbb{N}$ such that $n^{-q}< \frac{1}{2} (r-s)$ and an integer $0\leq u \leq n^q -1$ such that $[u n^{-q}, (u+1)n^{-q}] \subset [r,s]$. Then  with positive probability $[u n^{-q}, (u+1)n^{-q}]\times [0,1]$  is H-crossed by $F$ and thus by $F_q$. By statistical self-affinity the sets  $F_q \cap ([u n^{-q}, (u+1)n^{-q}]\times [0,1])$ and 
$F_q \cap ([0, n^{-q}]\times [0,1])$ have the same distribution modulo a horizontal shift of $u n^{-q}$, so in particular there is  positive probability that $F_q$ includes an H-crossing of the rectangle $K:=[0, n^{-q}]\times [0,1]$.

Lemma \ref{Col} provides an integer $k>q$ and specific segments $L,L',R',R$  which with probability $p_0>0$, say, are connected as in ($i$)-($iii$) in $F$ and thus in $F_k$ .  Write $C(K;L,L',R',R)$ for the event that $F_k$ has a component that connects the vertical segments $L,L',R',R$ within the rectangle $K$, connecting ${L}$ and $L'$ within the vertical strip with sides containing  ${L}$ and $L'$, and connecting $R'$ and $R$ within the vertical strip with sides containing  $R'$ and $R$. Let $\rho$ denote the operation on $\mathbb{R}^2$ of reflection in the vertical line $x= \frac12 (bn^{-k}+n^{-q})$, and let $\tau$ denote horizontal translation by $(b-a +n^{k-q})n^{-k}$. These mapping have been set so that the pair of right-hand intervals in the following chain of events align vertically with the pair of left-hand intervals of the succeeding event. 

We define a sequence of events:
\begin{equation}\label{events}
\begin{array}{ll}
&C(K;L,L',R',R),\; C\big(\rho(K);\rho(R),\rho(R'),\rho(L'),\rho(L)\big),
 \\

&C\big(\tau(K);\tau(L),\tau(L'),\tau(R'),\tau(R)\big),\;C\big(\tau\rho(K);\tau\rho(R),\tau\rho(R'),\tau\rho(L'),\tau\rho(L)\big), 
\\

&C\big(\tau^2(K);\tau^2(L),\tau^2(L'),\tau^2(R'),\tau^2(R)\big),\; C\big(\tau^2\rho(K);\tau^2\rho(R),\tau^2\rho(R'),\tau^2\rho(L'),\tau^2\rho(L)\big), \\

&C\big(\tau^3(K);\tau^3(L),\tau^3(L'),\tau^3(R'),\tau^3(R)\big), \ldots,
\end{array}
\end{equation}

\noindent continuing until either $\tau^\ell(K)$ or $\tau^\ell\rho(K)$ overlaps the right-hand edge of $[0,1]^2$ for some $\ell \in \mathbb{N}$  in which case we ignore the level-$k$ rectangles outside the square.

Note that if consecutive events in this list occur then the intervals in both events lie in the same component of  $F_k$. For example, for $p\geq 0$,
$\tau^p(R')$ and $\tau^p\rho(R)$ lie in a common vertical line as do $\tau^p(R)$  and
$\tau^p\rho(R')$. Since $\tau^p(R)$ is joined to $\tau^p(R')$ and $\tau^p\rho(R)$ is joined to $\tau^p\rho(R')$ between these two vertical lines, Lemma \ref{con} implies that all four segments intersect a common component of $F_k$, and thus all the segments listed in these two events are joined. In particular, if all the events in this list occur, then there is a connected component of $F_k$ that intersects $L$ and $\tau^\ell(R)$ or $\tau^\ell\rho(L)$ and so provides an H-crossing of $[0,1]^2$. 

But each event in the list has the same distribution as $C(K;L,L',R',R)$, modulo a translation or reflection, so occurs with probability $p_0$. These are all increasing events, so by Harris' inequality the probability of them all occurring is at least $p_0^{2\ell +1}>0$, in which case there is a positive probability of an H-crossing in $F_k$ and thus in $F$ (since there is positive probability of all the rectangles up to level-$k$ all being selected).

The argument for vertical crossings is similar.
\end{proof}

\noindent {\it Proof of Theorem \ref{thmbrief}}
For  $k\in \mathbb{N}$ and $0\leq b \leq m^k-1$ let $A(k,b)$ be the event that $F$ includes a vertical crossing of the horizontal strip $[0,1]\times [bm^{-k},(b+1)m^{-k}]$. Then
$$\{F\text{ has an H-crossing of } [0,1]^2 \} \subset \bigcup_{k=1}^\infty \bigcup_{b=0}^{m^k-1}A(k,b) \cup\{ F \text{ contains a horizontal line}\}.$$
Thus if $\P\{F \text{ has an H-crossing of }[0,1]^2\}>0$ then $\P(A(k,b)) >0$ for some $k,b$, since $\P\{ F \text{ contains a horizontal line}\} =0$ by Lemma \ref{seg}. Thus $F$ includes a V-crossing of $[0,1]\times [bm^{-k},(b+1)m^{-k}]$ for some $k,b$, so $F$ has a V-crossing of $[0,1]^2$ with positive probability by Proposition \ref{verthor}.

Hence if $\theta_H(p)>0$ then $\theta_V(p)>0$, with the reverse inequality following using the `vertical' part of Proposition \ref{verthor}.
$\Box$ \hfill

\subsection{Proof of Theorems \ref{phase} and \ref{cty}}

Now we write $p_C:= p_H=p_V$.

For Theorem \ref{phase} we must show that $\theta_H(p_C)>0$ and $\theta_V(p_C)>0$,which we do by adapting the proof in \cite[Section 5]{DM}. It seems awkward to show this directly, but rather we first consider crossings in the random model starting with the rectangle $[0,1]\times [0,2]$ and proceeding as before, with subdivision into a hierarchy of $n^{-k}\times m^{-k}$ rectangles for $k\geq 1$, with each rectangle selected with probability $p$. In other words there are two vertically adjacent independent copies of the original process in $[0,1]\times [0,1]$ and $[0,1]\times [1,2]$ with connection across $[0,1]\times \{1\}$ permitted in the natural way. We write $\tau_H(p)$ for the probability that there is a horizontal crossing of $[0,1]\times [0,2]$. Similarly, we may consider percolation on the rectangle $[0,2]\times [0,1]$, that is two horizontally adjacent copies of our original model, and write $\tau_V(p)$ for the probability of a vertical crossing of $[0,2]\times [0,1]$.

\begin{lemma}\label{double}
Let $0<p\leq 1$. The probabilities  $\theta_H(p), \theta_V(p),\tau_H(p),\tau_V(p)$ are either all $0$ or all strictly positive.
\end{lemma}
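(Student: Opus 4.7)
The plan is to reduce the four-way equivalence to Theorem~\ref{thmbrief} by combining two trivial inclusions with two mirror-image implications. The trivial pieces are $\tau_H(p)\geq\theta_H(p)$ and $\tau_V(p)\geq\theta_V(p)$, since any crossing of $[0,1]^2$ is a crossing of the larger containing rectangle. Together with $\theta_H(p)>0\Leftrightarrow\theta_V(p)>0$ from Theorem~\ref{thmbrief}, the lemma will follow once I establish $\tau_H(p)>0\Rightarrow\theta_V(p)>0$, and symmetrically $\tau_V(p)>0\Rightarrow\theta_H(p)>0$.

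For the first of these, write $\widetilde F=F^{(1)}\cup F^{(2)}$ for the random set on $[0,1]\times[0,2]$, with $F^{(1)}\subset[0,1]\times[0,1]$ and $F^{(2)}\subset[0,1]\times[1,2]$ independent copies of $F$. The plan is to mimic the proof of Theorem~\ref{thmbrief}. The first step is an analog of Lemma~\ref{seg}: almost surely $\widetilde F$ contains no horizontal line segment. Segments sitting in the interior of either half follow directly from Lemma~\ref{seg} applied to that copy; the only novel case is a segment on the shared line $y=1$, which I would rule out by rerunning the level-$q$ counting from Lemma~\ref{seg}, now with the two abutting rectangles drawn from the two independent copies. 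For a dyadic segment $S=[an^{-j},(a+1)n^{-j}]\times\{1\}$ this forces, at each $q\geq j$, one of two independent events of probability $1-(1-p)^2$ for each of $n^{q-j}$ columns, and the resulting bound $(2p-p^2)^{n^{q-j}}\to 0$ together with the non-dyadic case treated as in Lemma~\ref{seg} does the job.

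Given this, any H-crossing component of $\widetilde F$ has almost surely strict extreme heights $h_-<h_+$. A countable decomposition over pairs $(k,b)$ with $k\geq 1$ and $0\leq b\leq 2m^k-1$ then produces a fixed $(k,b)$ for which, with positive probability, $h_-<bm^{-k}<(b+1)m^{-k}<h_+$; the crossing component then restricts to a V-crossing of the strip $[0,1]\times[bm^{-k},(b+1)m^{-k}]$. The critical geometric observation is that no such strip straddles $y=1$: when $b<m^k$ one has $(b+1)m^{-k}\leq 1$, and when $b\geq m^k$ one has $bm^{-k}\geq 1$, so the strip lies inside exactly one of the two unit-square copies, where $\widetilde F$ agrees with a translate of a single copy of $F$. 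Applying Proposition~\ref{verthor} to that copy yields $\theta_V(p)>0$, and Theorem~\ref{thmbrief} then supplies $\theta_H(p)>0$.

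The implication $\tau_V(p)>0\Rightarrow\theta_H(p)>0$ is entirely symmetric: use the absence of vertical line segments in the horizontally adjacent two-copy model, decompose over vertical strips $[an^{-k},(a+1)n^{-k}]\times[0,1]$ indexed by $0\leq a\leq 2n^k-1$, and note that none of these straddles $x=1$. The main obstacle throughout is the two-copy version of Lemma~\ref{seg} on the shared boundary; the counting sketched above handles it, and every remaining step is a direct transcription of the arguments already developed in Section~\ref{mainbrief}.
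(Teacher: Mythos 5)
Your proposal is correct, but it reaches the conclusion by a genuinely different route from the paper for the key implication. The paper shows $\tau_H(p)>0\Rightarrow\theta_H(p)>0$ directly: it reruns the link construction of Lemma~\ref{Col} and the chaining of Proposition~\ref{verthor} inside the two-copy model to produce, with positive probability, an H-crossing of $[0,n]\times[0,2]$ viewed as an $n\times 2$ array of independent copies, and then uses statistical self-affinity to shrink this to an H-crossing of $[0,1]\times[0,2m^{-1}]\subset[0,1]^2$ in $F_2$; the same is done for $\tau_V$, and Theorem~\ref{thmbrief} closes the loop. You instead prove the cross implication $\tau_H(p)>0\Rightarrow\theta_V(p)>0$ by the strip-extraction argument already used for Theorem~\ref{thmbrief}: a seam version of Lemma~\ref{seg} guarantees $h_-<h_+$ for the crossing component, a dyadic horizontal strip strictly between these heights never straddles $y=1$, so the V-crossing it carries lives in a single copy of $F$ and Proposition~\ref{verthor} applies. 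This avoids redoing the chaining and the rescaling trick in the two-copy setting, at the cost of extending Lemma~\ref{seg} across the seam, which your independent-pair counting handles correctly (the two abutting level-$q$ rectangles now come from independent copies, so the bound $(2p-p^2)^{n^{q-j}}$ survives). One small inaccuracy: when the chosen strip has an endpoint exactly on $y=1$ (e.g.\ $b=m^k-1$), the restriction of $\widetilde F$ to the closed strip is \emph{not} a translate of a single copy, since points of the other copy on the line $y=1$ intrude; you should simply choose $(k,b)$ so that $1\notin[bm^{-k},(b+1)m^{-k}]$, which is always possible because $(h_-,h_+)\setminus\{1\}$ contains a nonempty open interval. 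With that adjustment your argument is complete, and the logical skeleton (two trivial inclusions, two cross implications, plus Theorem~\ref{thmbrief}) does yield the four-way equivalence.
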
 
\begin{proof}
Clearly $\tau_H(p)=0$ implies $\theta_H(p)=0$. To show that if $\tau_H(p)>0$ then $\theta_H(p)>0$ is a variant of  \cite[Proof of Lemma 5.1]{DM}. More specifically, this is similar to the construction of Lemma \ref{Col} but starting with a positive probability H-crossing of the rectangle $[0,1]\times [0,2]$ and extending a connected component horizontally in stages by chaining together links as in the proof of Proposition \ref{verthor} to obtain a positive probability crossing in $F_k$ for some $k$, and thus in $F$, of $[0,n]\times [0,2]$, considered as a $n\times 2$ array of independent copies of the random process on $[0,1]\times [0,1]$. But the probability of an H-crossing of $[0,n]\times [0,2]$ in $F$ is, by self-affinity, the same as the probability of an H-crossing of $[0,1]\times [0,2m^{-1}]\subset [0,1]\times [0,1]$ in $F_2$ which is therefore positive giving a positive probability of an H-crossing in $F$.

Thus $\tau_H(p)>0$ if and only if  $\theta_H(p)>0$. Similarly $\tau_V(p)>0$ if and only if  $\theta_V(p)>0$. By Theorem \ref{thmbrief} $\theta_H(p)>0$ if and only if $\theta_V(p)>0$, so the conclusion follows.
\end{proof}

The next lemma shows that for a given $p$,  $\tau_H(p)$ and $\tau_V(p)$ cannot both be positive but very small.

\begin{lemma}\label{tau}
Let $0<p\leq 1$. If $\theta_H(p)\geq \theta_V(p)$ then either $\tau_H(p)= 0$ or $\tau_H(p)\geq  (4m)^{-n/(n-1)}$.
If $\theta_V(p)\geq \theta_H(p)$ then either $\tau_V(p)= 0$ or $\tau_V(p)\geq  (4n)^{-m/(m-1)}$.
\end{lemma}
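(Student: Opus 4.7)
The plan is to prove the first statement; the second follows by interchanging the roles of the two coordinate directions and of $n,m$. Assume $\theta_H(p)\geq\theta_V(p)$ and $\tau_H(p)>0$; set $q=\tau_H(p)$ and introduce the auxiliary quantity
\[
\nu := \P\bigl(F\text{ H-crosses }[0,1/n]\times[0,2]\bigr).
\]
I aim to prove the two-sided estimate $q^{1/n}\leq\nu\leq 4mq$, which gives $q^{(n-1)/n}\geq(4m)^{-1}$ and hence $q\geq(4m)^{-n/(n-1)}$.

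For the lower bound, divide $[0,1]\times[0,2]$ into the $n$ vertical columns $C_i=[(i-1)/n,i/n]\times[0,2]$, which lie in disjoint level-$1$ rectangles and so carry independent copies of the process, with $\P(C_i\text{ is H-crossed})=\nu$. I claim that any H-crossing of $[0,1]\times[0,2]$ forces each $C_i$ to be H-crossed; otherwise the planar separation argument underlying Lemma~\ref{con} produces a continuous curve in $C_i\setminus F$ joining the bottom and top edges of $C_i$, and any connected set in $F$ reaching both $\{0\}\times[0,2]$ and $\{1\}\times[0,2]$ would have to cross this forbidden curve. By independence, $\nu^n\geq q$.

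For the upper bound, self-affinity identifies $[0,1/n]\times[0,2]$ with $[0,1]\times[0,2m]$ viewed as $2m$ vertically stacked independent copies of the unit-square process, so $\nu=\P(F\text{ H-crosses }[0,1]\times[0,2m])$. Let $[y_-,y_+]$ be the vertical extent of an H-crossing component and set $a=\lfloor y_-\rfloor$. If $y_+\leq a+2$, the component lies in the $1\times 2$ rectangle $[0,1]\times[a,a+2]$, an event of probability $\tau_H(p)$; otherwise $y_-<a+1$ and $y_+>a+2$, and a continuous-path/$\varepsilon$-chain argument (take the last exit from $y=a+1$ before the first hit of $y=a+2$) extracts a V-crossing of the unit square $[0,1]\times[a+1,a+2]$, an event of probability $\theta_V(p)$. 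Summing over the $O(m)$ values of $a$ gives $\nu\leq 2m\,\tau_H(p)+2m\,\theta_V(p)$. Since $[0,1]^2\subset[0,1]\times[0,2]$ implies $\theta_H(p)\leq\tau_H(p)$, the hypothesis $\theta_V\leq\theta_H$ then yields $\nu\leq 4mq$, closing the estimate.

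The main technical obstacle is the planar topology used twice: once to convert a left-right crossing into a crossing of each column (lower bound) and once to pull a V-crossing of a unit square out of a tall connected component (second case of the upper bound). Both steps rest on the same Jordan-curve-style separation idea that underlies Lemma~\ref{con}, together with the $\varepsilon$-chain characterisation of connectedness to handle the absence of local path-connectedness in $F$.
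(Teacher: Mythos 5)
Your proof is correct and follows essentially the same route as the paper's: the lower bound $\nu^n\geq\tau_H(p)$ is the paper's use of the $n$ independent level-$1$ columns, and your dichotomy (crossing component confined to two stacked unit cells versus a forced V-crossing of an intermediate cell) is exactly the paper's decomposition into the events $H^p$ and $V^p$, leading to the same inequality $\tau_H(p)\leq(4m)^n\tau_H(p)^n$. The only slip is the claimed \emph{equality} $\nu=\P(F\text{ H-crosses }[0,1]\times[0,2m])$, which holds only after conditioning on the column's level-$1$ rectangles being selected; since you use this solely as an upper bound on $\nu$ the argument is unaffected (the paper instead retains the selection factors $p$, $p^2$, $2p(1-p)$ and then bounds them by $1$).
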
 
\begin{proof}
Assume that $\theta_H(p)\geq \theta_V(p)$; the reverse case is similar. We adapt the proof of \cite[Theorem 5.4]{DM}.

Consider one of the level-1 columns of $[0,1]\times [0,2]$ comprising $2m$ size $m^{-1}\times n^{-1}$ rectangles, $Q_1,\ldots,Q_{2m}$. If there is a horizontal crossing of $[0,1]\times [0,2]$ and thus of this column, then at least one of the following events occurs:
$$V^p := \bigcup_{i=1}^{2m}\{ \text{there exists a vertical crossing of } Q_i\} \text{ and }$$
$$H^p := \bigcup_{i=1}^{2m-1}\{ \text{there exists a horizontal crossing of } (Q_i\cup Q_{i+1})\}.$$
Using the statistical self-similarity between the process in $[0,1]^2$ and each $Q_i$ and between that in $[0,1]\times [0,2]$ and each pair $Q_i\cup Q_{i+1}$, allowing for the probability of these rectangles not being deleted at level-1, and using that there are $n$ such columns, we relate the probabilities:
\begin{eqnarray*}
\tau_H(p)&\leq &(\P(V^p\cup H^p))^n \leq \big(\P(V^p)+ \P(H^p)\big)^n\\
&\leq &\Big(2mp\,\theta_V(p) +(2m-1)\big(p^2\tau_H(p) +2p(1-p) \theta_H(p)\big)\Big)^n\\
&\leq &\Big(2mp +(2m-1)\big(p^2 +2p(1-p)\big)\Big)^n\tau_H(p)^n\\
&\leq &\big(2m +(2m-1)\big)^n\tau_H(p)^n\\
&\leq& (4m)^n\tau_H(p)^n,
\end{eqnarray*}
using that $\theta_V(p)\leq \theta_H(p)\leq \tau_H(p)$. Thus $\tau_H(p)= 0$ or $\tau_H(p)\geq  (4m)^{-n/(n-1)}$.

The case of $\theta_V(p)\geq \theta_H(p)$ is similar, considering crossings of $[0,2]\times [0,1]$.
\end{proof}

Combining the previous two lemmas yields Theorem \ref{phase}.

\noindent {\it Proof of Theorem \ref{phase}}
First we note that $\theta_H(p)$ is increasing and upper-semicontinuous and therefore right-continuous in $p$. To see this, the probability $\theta^k_H(p)$ of a horizontal crossing of  $[0,1]^2$ within $E_k$ (see \eqref{Ek}) is a polynomial in $p$, since the probability of any given configuration of rectangles at the first $k$ levels is a polynomial of the form $p^\alpha(1-p)^\beta$, and summing  the disjoint probabilities of the configurations that include a crossing within $E_k$ gives a polynomial. Thus $\theta^k_H(p)$ is increasing and continuous in $p$ for each $k$. As $k\to\infty$,  $\theta^k_H(p) \searrow\theta_H(p)$, recalling that there is a crossing in $F$ if and only if there is a crossing in $E_k$ for all $k$, so $\theta_H(p)$ is increasing and upper-semicontinuous, so right-continuous, as the infimum of continuous functions.

Recall that $p_C$ is the common critical probability for $\theta_H(p)$ and $\theta_V(p)$. There is a strictly decreasing sequence $p_n\searrow p_C$ either with $\theta_H(p_n) \geq\theta_V(p_n)$ for all $n$ or with $\theta_H(p_n) \leq\theta_V(p_n)$ for all $n$, suppose the former, the latter case is similar. By  Lemma \ref{tau}, $\tau_H(p_n)\geq  (4m)^{-n/(n-1)}$ for all $n$, so since $\tau_H(p)$ is right-continuous, $\tau_H(p_C)\geq  (4m)^{-n/(n-1)}$. By Lemma \ref{double} $\theta_H(p_C)>0$ and  $\theta_V(p_C)>0$. 
\hfill $\Box$

We now establish Theorem \ref{cty} reflecting the nature of $F$ when $p<p_C$ and when $p\geq p_C$.

\noindent {\it Proof of Theorem \ref{cty}}

\noindent (i) Let $1/mn<p<p_C$ so conditional on $F\neq \emptyset$, $\dim_H F >0$ by Proposition \ref{dims}, so $F$ is uncountable.

Suppose there is a positive probability of a non-trivial connected component $F_0$ of $F$. Then  there are  $a,k \in \mathbb{N}$ such that either $F$ includes an H-crossing of $[an^{-k}, (a+1)n^{-k}] \times [0,1]$ or a V-crossing of  $[0,1]\times [an^{-k}, (a+1)n^{-k}]$; by countable decomposition we may select an orientation and  specific $a$ and $k$ for which there is a positive probability of this occurring.
By Proposition \ref{verthor} there is a positive probability of either a V-crossing or an H-crossing of $[0,1]\times [0,1]$, implying that $\theta_H(p),\theta_V(p)>0$, contradicting that $p<p_C$.

\noindent (ii) Let $p_C\leq p<1$. We call a rectangle  $R$ in $E_k$ an {\it island} if $R\cap F$
contains a non-trivial connected component and has an empty perimeter band (i.e. the distance from $R\cap F$ to the perimeter of $R$ is positive). 
 There is a probability $p_0>0$ that $E_0$ is an island, since there is a positive probability that none of the level-2 perimeter rectangles are selected but some interior sub-rectangle is H-crossed by $F$, so by statistical self-affinity there is a probability $p_0$ that each $R$ in $E_k$ is an island. In particular each island in $E_k$ contains a non-trivial connected component which is disjoint from all other level-$k$ rectangles. 

Given $M\in \mathbb{N}$ we may proceed with the step by step construction of $F$ until we stop at the first  $k$ such that either $|E_k| \geq M $ or $E_k =\emptyset$, the former will occur with probability at least $p_N$, the probability of non-extinction of the construction. Conditioning on $|E_k| \geq M $, the probability that at least $\frac12  p_0M$ of these  $R$ are islands is at least $1- 4/(Mp_0)$ using Chebychev's inequality. By taking $M$ large, it follows that the number of islands, and thus the number of disjoint non-trivial connected components of $F$ is almost surely unbounded conditional on non-extinction. 

Every non-trivial connected component crosses some horizontal or vertical strip of rectangles and each such strip is almost surely crossed by finitely many components, just as in the statistically self-similar case, see \cite[Theorem 4.1]{Mee}. Thus the number of connected components is countable.
\hfill $\Box$

\section{Final remarks}\label{final}
Once it is established that the critical probabilities for H-crossings and V-Crossings are equal, many other topological properties of the statistically self-affine sets $F$ follow in a very similar  way as for statistically self-affine sets.

Path and arc connectivity (which are equivalent in $\mathbb{R}^n$) are stronger conditions than topological connectivity. Nevertheless, the proof of \cite[Theorem 3.1]{Mee} adapts to show that 
if  $p_C\leq p \leq 1$ then with positive probability $F$ is crossed both horizontally and vertically by a path or arc; thus the critical probability for path/arc connectivity  is the same as for topological connectivity. 

Similarly, adapting another proof of \cite[Lemma 2.4]{Mee} shows that almost surely just finitely many disjoint connected components of $F$ cross $F$ horizontally and finitely many cross vertically.

We end with some questions. Because in the $k$th level rectangular grid there are far more rectangles in each column than in each row, one would expect that the chance of an $V$-crossing would be much smaller than that of an  H-crossing. However, this does not seem obvious.

\begin{ques}\label{Q1}
 Is $\theta_V(p)\leq \theta_H(p)$ for all $m>n\geq 2$ and all $p_C\leq p\leq 1$? Less strongly, is there a constant $c>0$ such that $\theta_V(p)\leq c\, \theta_H(p)$ for all $p_C\leq p\leq 1$?
 \end{ques}
 An affirmative answer to Question \ref{Q1} would remove the need to consider two cases in the proof of Theorem \ref{phase}. 
\begin{ques}\label{Q2}
 Are $\theta_H(p)$ and $\theta_V(p)$ continuous on the interval $[p_C,1]$?
 \end{ques}
 This does not seem to be known even for the statistically self-similar $m\times m$ case.

\section*{Acknowledgements}
The authors are grateful to the referees for their helpful comments.

\medskip

\noindent Email addresses: \texttt{kjf@st-andrews.ac.uk}, \texttt{tf66@st-andrews.ac.uk}

\end{document}